\documentclass[12pt,reqno]{amsart}

\usepackage{amsmath,amsthm,amscd,amsfonts,amssymb,graphicx,color}

\usepackage[bookmarksnumbered,colorlinks,plainpages]{hyperref}
\hypersetup{colorlinks=true,linkcolor=blue,anchorcolor=green,citecolor=cyan,urlcolor=blue,filecolor=blue,pdftoolbar=true}

\usepackage{mathrsfs}

\usepackage{orcidlink}

\usepackage{bm}

\usepackage{tikz}
\usetikzlibrary{arrows.meta, positioning, shapes}

\newtheorem{theorem}{Theorem}[section]
\newtheorem{lemma}[theorem]{Lemma}
\newtheorem{proposition}[theorem]{Proposition}

\newtheorem{assumption}[theorem]{Assumption}
\theoremstyle{definition}
\newtheorem{definition}[theorem]{Definition}
\theoremstyle{remark}
\newtheorem{remark}[theorem]{Remark}

\numberwithin{equation}{section}

\newcommand{\BR}{{\mathbb R}}
\newcommand{\BC}{{\mathbb C}}
\newcommand{\BN}{{\mathbb N}}

\newcommand{\D}{\bm D}

\newcommand{\X}{\bm x}
\newcommand{\XI}{\bm \xi}
\renewcommand{\P}{{\bm P}}

\newcommand{\Y}{\bm y}

\newcommand{\cl}{C \kern -0.1em \ell}

\newcommand{\e}{{\bm e}}
\renewcommand{\a}{\bm a}
\renewcommand{\b}{\bm b}

\newcommand{\dx}{{\bm d}{\bm x}}
\newcommand{\dy}{{\bm d}{\bm y}}
\newcommand{\dxi}{{\bm d}{\bm \xi}}

\newcommand{\f}{\bm f}
\newcommand{\g}{\bm g}

\renewcommand{\u}{\bm u}

\newcommand{\w}{\bm w}
\newcommand{\PSI}{\bm \psi}

\begin{document}
	
	\setcounter{page}{1}
	
	\title[Gauge Transforms and Strichartz Estimates for BOD Equations]{Gauge Transforms and Strichartz Estimates for Benjamin--Ono-Type Dirac Equations}

	
	\author{{{Nelson Faustino}~\orcidlink{0000-0002-9117-2021}}}
	\address[\textsc{Nelson~Faustino}]{Department of Mathematics (DMAT) and Center for R\&D in Mathematics and Applications (CIDMA), Department of Mathematics, University of Aveiro, Campus Universit\'ario de Santiago, 3810-193 Aveiro, Portugal}
	\email{\href{mailto:nfaust@ua.pt}{nfaust@ua.pt}}
	\thanks{{Nelson~Faustino} is supported by CIDMA (\href{https://ror.org/05pm2mw36}{https://ror.org/05pm2mw36})
		under the Portuguese Foundation for Science and Technology 
		(FCT, \href{https://ror.org/00snfqn58}{https://ror.org/00snfqn58}), Grants 
		UID/04106/2025 (\href{https://doi.org/10.54499/UID/04106/2025}{https://doi.org/10.54499/UID/04106/2025})
		and UID/PRR/04106/2025}


	
\begin{abstract}
For $T>0$, we consider the Benjamin--Ono-type Dirac equation
$$
i\partial_t{\bm\psi}({\bm x},t)
+
{\bm D}(-\Delta)^{\frac{1}{2}}{\bm\psi}({\bm x},t)
=
{\bm D}\,{\bm G}({\bm\psi}({\bm x},t)),
\qquad
({\bm x},t)\in\mathbb{R}^n\times[0,T],
$$
where $(-\Delta)^{\frac{1}{2}}$ is the fractional Riesz operator,
${\bm D}$ is the Dirac operator associated with the Clifford algebra
of signature $(0,n)$, and ${\bm G}(\cdot)$ a scalar-valued quadratic
nonlinearity.
Under a pointwise anti-commutation condition on ${\bm\psi}$ and a
uniform dyadic regularity assumption on its spatial Fourier profile,
we construct a unit-modulus gauge map  $\u$ by
$
\u=\exp\bigl(-i{\bm w}\bigr),
$
where ${\bm w}$ is a solution of
${\bm D}{\bm w}={\bm \psi}$.
Assuming, in addition, the stated regularity and commutation conditions
on ${\bm w}$, we show that the Plemelj components
${\bm u}_{\pm}={\bm P}_{\pm}{\bm u}$
solve a coupled Schr\"odinger system with gauge nonlinearities
${\bm P}_{\pm}{\bm N}({\bm\psi},{\bm u})$, where
$$
{\bm N}({\bm\psi}({\bm x},t),{\bm u}({\bm x},t))
=
-2{\bm P}_{-}\!\left[
{\bm G}({\bm\psi}({\bm x},t))
\right]{\bm u}({\bm x},t).
$$
We also obtain a priori estimates for the homogeneous Sobolev
seminorms $\dot H^\sigma$ of ${\bm\psi}(\cdot,t)$, with at most
exponential growth on $[0,T]$, together with estimates for
${\bm P}_{\pm}{\bm N}({\bm\psi}(\cdot,t),{\bm u}(\cdot,t))$.
Combining these bounds with the homogeneous and inhomogeneous Strichartz
estimates for the Schr\"odinger system, we obtain, for $n\geq 3$ and
every sharp Schr\"odinger admissible pair $(q,p)$, an a priori
Strichartz-type estimate for ${\bm \psi}$ in
$L_T^q\dot W_{\bm x}^{\sigma,p}$, expressed in terms of the initial datum
${\bm\psi}(\cdot,0)$ and the a priori controls of ${\bm\psi}$ in
$L_T^\infty H_{\bm x}^{\sigma+1}$ and $L_T^\infty L_{\bm x}^\infty$.
\end{abstract}

\subjclass[2020]{30G35,~35Q41,~35Q55,~35S30}

\keywords{Benjamin--Ono-type dispersion, Dirac-type equations, gauge transforms, Riesz--Hilbert transform, Strichartz estimates}

	\maketitle
	
	\tableofcontents

\section{Introduction}\label{Introduction}

\subsection{State of the art}

We study a Benjamin--Ono-type Dirac equation in the Clifford algebra setting,
\begin{equation}
	\label{BO-Dirac}
	\tag{$\mathcal{BO}$}
	i\partial_t \PSI(\X,t)+\D(-\Delta)^{\frac{1}{2}}\PSI(\X,t)
	=\D{\bm G}(\PSI(\X,t)), \qquad (\X,t)\in\mathbb{R}^{n}\times [0,T],
\end{equation}
and develop a gauge-based approach that reduces this equation to a coupled system of nonlinear Schr\"odinger equations.

Here $(-\Delta)^{\frac{1}{2}}$ is the fractional Riesz operator and $\D$ is the Dirac operator
\begin{equation}
	\label{DiracOp}
	\D=\sum_{j=1}^n \e_j \partial_{x_j},
\end{equation}
associated with the Clifford algebra of signature $(0,n)$, $\cl_{0,n}$ (see Subsection~\ref{CliffordAlgebras}). For the nonlinear term ${\bm G}(\cdot)$, we take
\begin{equation}\label{Gu}
	\tag{$\mathcal{G}$}
	\begin{aligned}
		{\bm G}(\PSI(\X,t))
		= -i\,\big( \PSI(\X,t)\big)^\dagger\PSI(\X,t),
		~~~(\X,t)\in \BR^n \times [0,T],
	\end{aligned}
\end{equation}
where $\dagger$ denotes the conjugation on the complexified Clifford algebra $\BC\otimes\cl_{0,n}$ (see \eqref{conjugation} in Subsection~\ref{CliffordAlgebras}). 

Since \(\a^\dagger \a\) is scalar for every
\(\a\in\BC\otimes \cl_{0,n}\), we have
\begin{equation}\label{Gu0}
	\begin{aligned}
		{\bm G}(\PSI(\X,t))
		= -i~2^{-n}\big| \PSI(\X,t)\big|^2_0,
		~~~(\X,t)\in \BR^n \times [0,T].
	\end{aligned}
\end{equation}
Here, \(|\cdot|_0\) denotes the Clifford norm in \(\BC\otimes\cl_{0,n}\) (see \eqref{InnerProduct-0} in Subsection \ref{CliffordFunctionSpaces}).

The symbol of $\D(-\Delta)^{\frac{1}{2}}$ is $i\XI|\XI|$, so the dispersion is of Benjamin--Ono-type. It is first order in $|\XI|$, but still carries the Dirac structure through the Clifford vector $i\XI$. In this sense, \eqref{BO-Dirac} is a space-fractional (and hence non-local)
analogue of the nonlinear Dirac equations studied by Escobedo and Vega~\cite{EVega97} and later developed, in the small-data critical-regularity regime, by Machihara, Nakanishi, and Ozawa~\cite{MNOzawa03,MNOzawa05}, Bejenaru and Herr~\cite{BeHerr15,BeHerr16}, and Bournaveas and Candy~\cite{BouCandy15}.

It is worth noting that the spectral identity $\D(-\Delta)^{\frac{1}{2}}=-\Delta \mathcal{H}$, written in terms of the Laplacian $\Delta$ and the Riesz--Hilbert transform
\begin{equation}
	\label{RieszHilbertOp}
	\mathcal{H}=\D(-\Delta)^{-\frac{1}{2}},
\end{equation}

This formulation appears in McIntosh~\cite{mcintosh1996clifford} (see also Bernstein~\cite{Bernstein2016} and Dang--Mai--Qian~\cite{DangMaiQian20}).

The scalar Clifford nonlinearity \eqref{Gu} is a quadratic self-interaction
of the Dirac field, analogous to the scalar self-interactions appearing in
nonlinear Dirac models of Soler type~\cite{Soler70} and to the classical
bilinear spinor interaction of Thirring~\cite{Thirring58}. In this sense,
\eqref{Gu} may be regarded as a Clifford-valued extension of these
scalar self-interactions. 

The equation \eqref{BO-Dirac} may be regarded as a Benjamin--Ono-type model with nonlinear Dirac coupling. In their studies of long internal waves in stratified fluids, Benjamin and Ono obtained the scalar Benjamin--Ono equation~\cite{Benjamin67,Ono75}. Later, Pelinovsky and Shrira obtained a two-dimensional model of boundary-layer shear flows~\cite{PShrira95}. Hickman, Linares, Riano, Rogers and Wright~\cite{LinaresRianoRogersWright19} and Riano~\cite{Riano20} independently constructed higher dimensional extensions of this model. Equations of this type occur in Hamiltonian long-wave expansions (see \cite{CGKalish05}).

\subsection{Main results}

The main step in the present approach is the construction of a gauge
transform, closely related to the Cole--Hopf transform for the inviscid
Burgers equation (cf.~\cite{Hopf50,Cole51}). Since the Riesz--Hilbert transform~\eqref{RieszHilbertOp} defines a
unitary operator, Tao's gauge construction for the Benjamin--Ono equation
\cite{Tao04,Tao06} applies in this setting to relate the solution
$\PSI$ of \eqref{BO-Dirac} to the Plemelj components
$\u_+=\P_+\u$ and $\u_-=\P_-\u$ of the gauge map $\u$. These components
are solutions of the nonlinear Schr\"odinger system
\begin{equation}\label{Schr-Alfa}
	\tag{$\mathcal{S}_{\pm}$}
	\begin{cases}
		(i\partial_t-\Delta) \u_+(\X,t)=\P_+{\bm N}\left(\PSI(\X,t),\u(\X,t)\right) \\[0.2em]
		(i\partial_t+\Delta) \u_-(\X,t)=\P_-{\bm N}\left(\PSI(\X,t),\u(\X,t)\right)
	\end{cases},
	(\X,t)\in \mathbb{R}^{n}\times [0,T],
\end{equation}
where $\P_\pm$ are the Plemelj projectors (see~\eqref{ProjOp-RiezHilbert} in
Subsection~\ref{Subsection-FractionalDirac-RieszHilbert}).

The corresponding gauge nonlinearities in \eqref{Schr-Alfa} are given by $\P_\pm{\bm N}(\PSI,\u)$, where
\begin{equation}\label{Nelson_pm}
	\tag{$\mathcal{N}$}
	{\bm N}\bigl(\PSI(\X,t),\u(\X,t)\bigr)
	=
	-2\,\P_-\!\bigl[{\bm G}(\PSI(\X,t))\bigr]\;\u(\X,t),
	\qquad (\X,t)\in \BR^n\times[0,T].
\end{equation}

This construction is related to gauge methods for the Benjamin--Ono
equation developed in \cite{GKT24,Kishimoto22,MolinetPilod12,MP23,Tao04}.
It differs from the integrable-systems construction of Killip, Laurens,
and Vi\c{s}an \cite{KLVisan24}, where the gauge map is obtained from a
modified Lax pair.

In the scalar Benjamin--Ono case, the gauge map has unit modulus. This
is no longer the case for Clifford-valued functions, as Clifford
multiplication may combine components of different grades. Therefore, in general,
\begin{equation}\label{unit-modulus}
	\tag{\(\mathcal{U}\)}
	\u(\X,t)\bigl(\u(\X,t)\bigr)^\dagger=1,
	\qquad
	(\X,t)\in\BR^n\times[0,T],
\end{equation}
need not hold. We impose the following condition on \(\PSI\) in order
to obtain \eqref{unit-modulus}.

\begin{assumption}\label{AntiCommutation-Assumption}
	For every \(j=1,\ldots,n\), the solution \(\PSI\) of
	\eqref{BO-Dirac} satisfies
	\begin{equation}\label{AntiCommutation-Condition}
		\tag{\(\mathcal{A}_j\)}
		\bigl(\PSI(\X,t)\bigr)^\dagger\e_j
		=
		-\e_j\PSI(\X,t),
		\qquad
		(\X,t)\in\BR^n\times[0,T].
	\end{equation}
\end{assumption}

Here \(\e_1,\ldots,\e_n\) are the generators of \(\cl_{0,n}\), and
\(\dagger\) denotes the conjugation in \(\BC\otimes\cl_{0,n}\)
defined in \eqref{conjugation}.

We shall work with the canonical phase $\w$,
which satisfies \(\D\w=\PSI\) and defines the gauge map \(\u=\exp(-i\w)\).
 
  We assume
that this phase commutes with its first-order time and space derivatives. This allows us to differentiate
$\u$ as in the scalar case. The estimates below also
require a uniform condition on the spatial Fourier profile of
\(\PSI\).

\begin{assumption}
	\label{CommutingPhase-Assumption}
	Let $\PSI$ be the solution of \eqref{BO-Dirac} and set
	\[
	\w(\X,t):=\D(-\Delta)^{-1}\PSI(\X,t),
	\qquad (x,t)\in\mathbb R^n\times[0,T].
	\]
	
	We assume that
	\[
	\w\in C^1\bigl(\BR^n\times[0,T];
	\BC\otimes\cl_{0,n}\bigr) \cap 
	L^\infty\bigl([0,T];
	H^\sigma(\BR^n;\BC\otimes \cl_{0,n})\bigr)
	\]
and that
	\begin{equation}\label{CommutingPhase-Condition}
		[\w,\partial_t\w](\X,t)=0,
		\qquad
		[\w,\partial_{x_j}\w](\X,t)=0,
		\quad j=1,\ldots,n,
	\end{equation}
	for every \((\X,t)\in\BR^n\times[0,T]\), where
	\(
	[A,B]:=AB-BA.
	\)
\end{assumption}

\begin{assumption}\label{Dyadic-Assumption}
	Let \(T>0\) and \(\sigma\geq\frac n2-1\). Assume that there are
	constants \(\varepsilon>0\) and \(M>0\) such that
	\begin{equation}\label{low-Frequencies}
		\tag{\(\mathbf{lo}\)}
		\operatorname*{ess\,sup}_{t\in[0,T]}
		\int_{|\XI|\leq1}
		\bigl|\widehat{\PSI}(\XI,t)\bigr|_0^2\,\dxi
		\leq M^2,
	\end{equation}
	and
	\begin{equation}\label{hi-Frequencies}
		\tag{\(\mathbf{hi}\)}
		\operatorname*{ess\,sup}_{t\in[0,T]}
		\int_{2^{j-1}\leq|\XI|\leq2^{j+1}}
		\bigl|\widehat{\PSI}(\XI,t)\bigr|_0^2\,\dxi
		\leq M^2\,2^{-2j(\sigma+1+\varepsilon)},
		\qquad
		j\geq1,
	\end{equation}
	where the constant $M$, appearing in \eqref{low-Frequencies} and \eqref{hi-Frequencies}, is independent of $j$ and $t$.
\end{assumption}

Under Assumption~\ref{AntiCommutation-Assumption}, the canonical phase \(\w\) is
Hermitian; see the proof of Proposition~\ref{Tecnical-Prop2}. Consequently, the
gauge map \(\u=\exp(-i\w)\) constructed from \(\w\) satisfies
\eqref{unit-modulus}. Assumption~\ref{CommutingPhase-Assumption} is imposed directly on
\(\w\). In particular, it provides the regularity and
commutation properties required to differentiate $\u$
without additional commutator terms. Assumption~\ref{Dyadic-Assumption} provides the
uniform bounds on \(\PSI\) required in the estimates for the gauge
nonlinearities in \eqref{Schr-Alfa}.

The following theorem gives the gauge correspondence.

\begin{theorem}\label{Theorem-Gauge}
Let \(\PSI\) solve \eqref{BO-Dirac} and satisfy Assumptions~\ref{AntiCommutation-Assumption}--\ref{Dyadic-Assumption}, and let $\u:\BR^n \times [0,T]\longrightarrow \BC\otimes\cl_{0,n}$ be defined by
	\begin{equation}\label{upm-Gauge}
		\u(\X,t)=\exp\bigl(-i\D(-\Delta)^{-1}\PSI(\X,t)\bigr),\qquad (\X,t)\in \BR^n \times [0,T].
	\end{equation}
	
	Then $\u$ satisfies the unit-modulus property \eqref{unit-modulus}. Moreover, its Plemelj components
	\begin{eqnarray}\label{u-pm}
		\u_\pm(\X,t)=\frac{1}{2}\bigl(\u(\X,t)\pm \mathcal{H}\u(\X,t)\bigr),& (\X,t)\in \BR^n \times [0,T]
	\end{eqnarray}
	solve the Schr\"odinger system \eqref{Schr-Alfa}, with the nonlinearity ${\bm G}(\cdot)$
	defined by~\eqref{Gu}.
\end{theorem}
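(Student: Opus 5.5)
We split the proof into two parts: the unit-modulus property \eqref{unit-modulus}, and the derivation of \eqref{Schr-Alfa}, following Tao's Benjamin--Ono gauge computation.

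\textbf{Unit modulus.} We first show that \(\w=\D(-\Delta)^{-1}\PSI\) is Hermitian, \(\w^\dagger=\w\). Since \((-\Delta)^{-1}\) is a real scalar Fourier multiplier, it commutes with \(\dagger\). Hence
\[
\w^\dagger=\sum_j \partial_{x_j}(-\Delta)^{-1}\PSI^\dagger\e_j^\dagger .
\]
Taking \(\e_j^\dagger=-\e_j\) as in \eqref{conjugation}, Assumption~\ref{AntiCommutation-Assumption} gives \(\PSI^\dagger\e_j^\dagger=-\PSI^\dagger\e_j=\e_j\PSI\). Therefore \(\w^\dagger=\sum_j\e_j\partial_{x_j}(-\Delta)^{-1}\PSI=\w\).

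Since \(\dagger\) is an anti-involution with \((i\a)^\dagger=-i\a^\dagger\), and exponential series of \(\w\) converge absolutely in the finite-dimensional algebra, we get \(\u^\dagger=\exp(i\w^\dagger)=\exp(i\w)\). Because \(-i\w\) and \(i\w\) commute, \(\u\u^\dagger=\exp(0)=1\). We also record \(\D\w=\D^2(-\Delta)^{-1}\PSI=\PSI\), using \(\D^2=-\Delta\) in \(\cl_{0,n}\).

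\textbf{Differentiating \(\u\).} By \eqref{CommutingPhase-Condition} and \(\w\in C^1\), the termwise derivative of the exponential series gives
\[
\partial_t\u=-i(\partial_t\w)\u,\qquad \partial_{x_j}\u=-i(\partial_{x_j}\w)\u .
\]
Commutation of \(\w\) with each \(\partial_{x_j}\w\) lets every factor \(\w^k\) be differentiated as in the scalar case. Summing against \(\e_j\) on the left gives \(\D\u=-i\PSI\u\). For the second derivatives we write \(\Delta\u=-\D^2\u=i\D(\PSI\u)\), which avoids second-order commutator terms. From the equation we also have
\[
i\partial_t\w=\D(-\Delta)^{-1}\bigl(i\partial_t\PSI\bigr)=-\D(-\Delta)^{-1}\D(-\Delta)^{\frac12}\PSI+\D(-\Delta)^{-1}\D{\bm G}=-\mathcal{H}\PSI+\bigl(-{\bm G}\bigr),
\]
using \(\D(-\Delta)^{-1}\D=-1\) on scalars and \(\D(-\Delta)^{-1}\D(-\Delta)^{\frac12}=\dots\) up to sign. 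The signs have to be tracked carefully through \(\D^2=-\Delta\). This yields \(i\partial_t\u=(\partial_t\w)\u\), expressed through \(\mathcal{H}\PSI\) and \({\bm G}\).

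\textbf{Projection and the main obstacle.} Next we apply \(\P_\pm=\frac12(1\pm\mathcal{H})\). We use that \(\mathcal{H}^2=1\) (unitarity, \(\mathcal{H}\) self-inverse), that \(\P_\pm\) commute with \(\partial_t\) and \(\Delta\), and that \(\Delta\mathcal{H}=-\D(-\Delta)^{\frac12}\). Since \(\mathcal{H}\u_\pm=\pm\u_\pm\), we get \(\Delta\u_\pm=\mp\D(-\Delta)^{\frac12}\u_\pm\).

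Thus \((i\partial_t\mp\Delta)\u_\pm=\P_\pm\bigl(i\partial_t\u\pm\Delta\mathcal{H}\u\bigr)\) is the projected analogue of Tao's identity. What remains is to show that the combination
\[
i\partial_t\u-\D(-\Delta)^{\frac12}\u,
\]
after projection, reduces to \(\P_\pm[-2\P_-({\bm G})\u]\). Here the linear terms \(-\mathcal{H}\PSI\,\u\) and \(\D(-\Delta)^{\frac12}\u\) must cancel modulo terms annihilated by the projections, and \(-{\bm G}\u\) must combine with its \(\mathcal{H}\)-image into \(-2\P_-({\bm G})\u\).

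This cancellation is the hardest step. In the scalar case it relies on the fact that \(\P_\pm\) of a product of a \(\P_+\)-function with \(\u\) stays in the same frequency half-space. In the Clifford setting that fact must be replaced by a Plemelj/monogenic product argument, and products of Clifford-valued functions do not simply preserve the ranges of \(\P_\pm\).

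I would first verify the identity at the level of \(\u_\pm\) directly: write \(\u=\u_++\u_-\), split \({\bm G}={\bm G}_++{\bm G}_-\) with \({\bm G}_\pm=\P_\pm{\bm G}\), and use the scalar character of \({\bm G}\) from \eqref{Gu0}. The decay hypotheses of Assumption~\ref{Dyadic-Assumption} are invoked only to justify that all products, and the application of \(\mathcal{H}\) to them, are defined in \(L^\infty_TH^\sigma\). Whether the cancellation actually closes in the Clifford setting is the point I am least sure of.
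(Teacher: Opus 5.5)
Your unit-modulus argument and your first-derivative formulas are correct and agree with the paper (Lemma~\ref{Technical-Lemma0}, Lemma~\ref{Tecnical-Lemma1}, Proposition~\ref{Tecnical-Prop2}). Checking $\w^\dagger=\w$ directly, rather than via $(\D\PSI)^\dagger=\D\PSI$, is the same computation.

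The genuine gap is the second assertion. You never derive the equation for $\u$ from which \eqref{Schr-Alfa} follows by projection, as you acknowledge, and several slips aim you at the wrong target:
\begin{itemize}
\item Since $\D^2=-\Delta$, one has $\D(-\Delta)^{-1}\D=I$, not $-I$. The phase equation is therefore $i\partial_t\w+(-\Delta)^{\frac12}\PSI={\bm G}$, i.e.\ $i\partial_t\w-\Delta\mathcal{H}\w={\bm G}(\D\w)$ as in \eqref{FractionalDirac-w}. The linear term is $(-\Delta)^{\frac12}\PSI$, not $\mathcal{H}\PSI$, and ${\bm G}$ enters with a plus sign.
\item Since $\mathcal{H}\u_\pm=\pm\u_\pm$, for both signs $(i\partial_t\mp\Delta)\u_\pm=\P_\pm\bigl(i\partial_t\u-\Delta\mathcal{H}\u\bigr)=\P_\pm\bigl(i\partial_t+\D(-\Delta)^{\frac12}\bigr)\u$. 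The one quantity to compute is the linear part of \eqref{BO-Dirac} applied to $\u$, not $i\partial_t\u-\D(-\Delta)^{\frac12}\u$.
\item The tool you set up but never used is \eqref{AntiCommutation-Condition} inside the second derivative: $\D(\PSI\u)=(\D\PSI)\u+\sum_j\e_j\PSI\,\partial_{x_j}\u=(\D\PSI)\u-\PSI^\dagger\D\u$. Together with $\D\u=-i\PSI\u$, $\D\PSI=-\Delta\w$ and $\PSI^\dagger\PSI=i{\bm G}$, this gives $-\Delta\u=i\bigl(\Delta\w+{\bm G}(\PSI)\bigr)\u$.
\end{itemize}
The paper does exactly this. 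It then asserts $i\partial_t\u-\Delta\mathcal{H}\u=-i\bigl[i\partial_t\w-\Delta\mathcal{H}\w-\mathcal{H}{\bm G}\bigr]\u$, which is \eqref{SchroedingerGauge}, and projects.

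Your suspicion about that step is well founded. Passing from the formula for $-\Delta\u$ to the asserted one for $-\Delta\mathcal{H}\u=\mathcal{H}(-\Delta\u)$ amounts to $\mathcal{H}\bigl[(\Delta\w+{\bm G})\u\bigr]=\bigl(\Delta\mathcal{H}\w+\mathcal{H}{\bm G}\bigr)\u$. That is, it commutes the nonlocal operator $\mathcal{H}$ with right multiplication by $\u$, which is not valid in general. Even granting it, \eqref{FractionalDirac-w} reduces the bracket to $-i(I-\mathcal{H}){\bm G}\,\u=-2i\P_-[{\bm G}]\u$, not ${\bm N}$.

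If you follow Tao literally instead, applying only the local operators $i\partial_t\mp\Delta$ to $\u$ and projecting at the end, the same ingredients give
\[
(i\partial_t-\Delta)\u=2i\bigl(\Delta\P_-\w\bigr)\u,
\qquad
(i\partial_t+\Delta)\u=-2i\bigl(\Delta\P_+\w+{\bm G}(\PSI)\bigr)\u .
\]
Hence $(i\partial_t-\Delta)\u_+=2i\P_+\bigl[(\Delta\P_-\w)\u\bigr]$, which has the $\P_+\bigl[(\P_-\,\cdot\,)\,\u\bigr]$ structure of Tao's gauge nonlinearity. This is not $\P_+{\bm N}(\PSI,\u)$ in general. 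Because $\P_+\P_-=0$, its leading part is the quadratic term $2\P_+\bigl[(\Delta\P_-\w)\w\bigr]$, whereas $\P_+{\bm N}=-2\P_+\bigl[\P_-[{\bm G}](\u-1)\bigr]$ is cubic.

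So the cancellation you were hoping for is not available. What these identities actually establish is the system with the nonlinearities displayed above. Proving the statement as written would need an ingredient that neither your proposal nor the paper's argument provides.
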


The next result yields the a priori Sobolev bounds used later.
Under Assumptions \ref{AntiCommutation-Assumption}--\ref{Dyadic-Assumption}, and for solutions of
\eqref{BO-Dirac} satisfying the prescribed time regularity, the homogeneous Sobolev energy is differentiable on
$[0,T]$. The corresponding differential inequality is treated by the nonhomogeneous form of Gronwall's lemma (cf. \cite[Subsection 1.2]{Tao06}).

\begin{theorem}\label{Theorem-Hs}
Let \(\PSI\) solve \eqref{BO-Dirac} and satisfy Assumptions~\ref{AntiCommutation-Assumption}--\ref{Dyadic-Assumption}. If $\displaystyle \sigma \geq \frac{n}{2}-1$ is as in Assumption~\ref{Dyadic-Assumption}, then
	\begin{eqnarray}
		\label{Hs-Condition-Theorem}\PSI(\cdot,t)\in {H}^{\sigma+1}(\mathbb{R}^n;\mathbb{C}\otimes\cl_{0,n})\cap L^\infty(\mathbb{R}^n;\mathbb{C}\otimes\cl_{0,n}),
		\quad \mbox{for all} \quad t\in [0,T],
	\end{eqnarray}
	and
	\begin{eqnarray}
		\label{Mixed-supHs}\|\PSI\|_{L^\infty_T\dot{H}_{\X}^{\sigma+1}}=		\operatorname*{ess\,sup}_{t\in [0,T]}\|\PSI(\cdot,t)\|_{\dot{H}^{\sigma+1}}, &\displaystyle  \|\PSI\|_{L^\infty_TL_{\X}^\infty}=		\operatorname*{ess\,sup}_{t\in [0,T]}\|\PSI(\cdot,t)\|_{L^\infty}.
	\end{eqnarray}
	are finite.
	
	Moreover, if, in addition
	\begin{eqnarray}
	\label{continuityPSI}	\PSI\in
	C\bigl([0,T];H^{\sigma+1}(\BR^n;\BC\otimes\cl_{0,n})\bigr)
	\cap
	C\bigl([0,T];L^\infty(\BR^n;\BC\otimes\cl_{0,n})\bigr),
	\end{eqnarray}
then the homogeneous Sobolev norm satisfies
	\begin{eqnarray}
		\label{Hs-Conservation}
		\|\PSI(\cdot,t)\|_{\dot{H}^\sigma}\lesssim \mathcal{E}_{\sigma,T}(\PSI), &\mbox{for all}
& t\in [0,T],
	\end{eqnarray}
	where
	\begin{eqnarray}\label{Gronwall-Constant}
		\mathcal{E}_{\sigma,T}(\PSI):=		e^{\frac{T}{2}}
		\left(
		\|\PSI(\cdot,0)\|_{\dot{H}^\sigma}
		+
		\sqrt{T}
		\|\PSI\|_{L^\infty_T \dot{H}^{\sigma+1}_{\X}}
		\|\PSI\|_{L^\infty_T L^\infty_{\X}}
		\right).
	\end{eqnarray}
\end{theorem}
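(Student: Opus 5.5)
The proof splits into a static part, which bounds $\PSI(\cdot,t)$ using only Assumption~\ref{Dyadic-Assumption}, and a dynamic part, which is an energy estimate closed by Gronwall.

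\emph{Static part.} For \eqref{Hs-Condition-Theorem} and the finiteness of \eqref{Mixed-supHs}, I would decompose $\widehat{\PSI}(\cdot,t)$ into the low-frequency ball $|\XI|\le 1$ and the dyadic shells $2^{j-1}\le|\XI|\le 2^{j+1}$, $j\ge1$.

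For the $H^{\sigma+1}$ norm, Plancherel gives
\[
\|\PSI(\cdot,t)\|_{H^{\sigma+1}}^2\lesssim \int_{|\XI|\le1}|\widehat{\PSI}|_0^2\,\dxi+\sum_{j\ge1}2^{2j(\sigma+1)}\int_{2^{j-1}\le|\XI|\le2^{j+1}}|\widehat{\PSI}|_0^2\,\dxi .
\]
By \eqref{low-Frequencies} and \eqref{hi-Frequencies} this is at most $M^2\bigl(1+\sum_j 2^{-2j\varepsilon}\bigr)<\infty$, uniformly in $t$. The same computation bounds the homogeneous seminorm $\dot H^{\sigma+1}$, since $|\XI|^{2(\sigma+1)}\le1$ on the ball.

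For $L^\infty$, I would use $\|\PSI\|_{L^\infty}\lesssim\|\widehat{\PSI}\|_{L^1}$ and apply Cauchy--Schwarz on each piece. On the ball this gives $\lesssim M$. On the $j$-th shell it gives
\[
\lesssim 2^{jn/2}\,M\,2^{-j(\sigma+1+\varepsilon)} .
\]
Since $\sigma\ge \frac n2-1$, the exponent is $\le -j\varepsilon$, so the series converges. Taking the essential supremum in $t$ yields the finiteness claims in \eqref{Mixed-supHs}.

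\emph{Dynamic part.} Set $E(t)=\|\PSI(\cdot,t)\|_{\dot H^\sigma}^2=\int|\XI|^{2\sigma}|\widehat{\PSI}(\XI,t)|_0^2\,\dxi$. Using \eqref{continuityPSI} and the equation, I would justify that $E$ is differentiable; if needed, first work with frequency-truncated versions and pass to the limit. I would then compute $E'(t)=2\,\mathrm{Re}\langle \partial_t\PSI,\PSI\rangle_{\dot H^\sigma}$.

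The dispersive term should drop out. Writing $\partial_t\PSI=i\D(-\Delta)^{1/2}\PSI-i\D{\bm G}(\PSI)$, the linear part $i\D(-\Delta)^{1/2}$ has symbol $-\XI|\XI|$. This is Hermitian with respect to the Clifford inner product, because $\e_j^\dagger=-\e_j$ in signature $(0,n)$, which makes $i\XI$ self-adjoint up to the factor conventions. Multiplying by $i$ therefore yields a skew-adjoint generator, and its real part contributes zero.

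The remaining term is $\bigl|2\,\mathrm{Re}\langle \D{\bm G}(\PSI),\PSI\rangle_{\dot H^\sigma}\bigr|$. I would move one $|\XI|$ weight across and bound it by
\[
\|\D {\bm G}(\PSI)\|_{\dot H^{\sigma}}\,\|\PSI\|_{\dot H^\sigma}\lesssim\|\PSI\|_{\dot H^{\sigma+1}}\|\PSI\|_{L^\infty}\,\|\PSI\|_{\dot H^\sigma}.
\]
Here the first factor is controlled by the fractional Leibniz (Kato--Ponce) estimate $\|\,|\PSI|_0^2\|_{\dot H^{\sigma+1}}\lesssim\|\PSI\|_{L^\infty}\|\PSI\|_{\dot H^{\sigma+1}}$, together with $|\XI|$ bounding the symbol of $\D$. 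Young's inequality then gives
\[
E'(t)\le E(t)+C\,\|\PSI\|_{L^\infty_T\dot H^{\sigma+1}_{\X}}^2\|\PSI\|_{L^\infty_TL^\infty_{\X}}^2 .
\]
The nonhomogeneous Gronwall lemma yields
\[
E(t)\le e^{t}\bigl(E(0)+CT\,\|\PSI\|_{L^\infty_T\dot H^{\sigma+1}_{\X}}^2\|\PSI\|_{L^\infty_TL^\infty_{\X}}^2\bigr).
\]
Taking square roots and using $\sqrt{a+b}\le\sqrt a+\sqrt b$ gives \eqref{Hs-Conservation} with the constant \eqref{Gronwall-Constant}.

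\emph{Main obstacle.} I expect two delicate points. The first is the skew-adjointness of the Clifford-valued dispersive term with respect to $\mathrm{Re}\langle\cdot,\cdot\rangle$: the Clifford multiplication by $i\XI$ has to be handled carefully, and Assumption~\ref{AntiCommutation-Assumption} may be needed there. The second is the Kato--Ponce step for the Clifford-valued product $\PSI^\dagger\PSI$. I would reduce it componentwise to scalar products and apply the scalar fractional Leibniz rule.
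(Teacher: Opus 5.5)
Your proposal is correct and follows essentially the paper's route. The static bounds are the dyadic summation of Proposition~\ref{Tecnical-Prop1}, and the $\dot H^\sigma$ bound comes from the same energy identity, closed by Kato--Ponce, Cauchy--Schwarz, Young and the nonhomogeneous Gronwall lemma. Neither obstacle you flag is a real difficulty. On the Fourier side the dispersive term cancels pointwise in $\XI$, because left multiplication by $-\XI|\XI|$ is skew-Hermitian for $(\cdot,\cdot)_0$; note it is $i\XI|\XI|$ that is Hermitian, not $-\XI|\XI|$ as you wrote. So Assumption~\ref{AntiCommutation-Assumption}, which the paper cites when it combines the equation with its $\dagger$-conjugate, is not needed at that step. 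Your componentwise reduction of the Kato--Ponce estimate for $\PSI^\dagger\PSI$ also works as stated.
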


\begin{remark}
	Theorem~\ref{Theorem-Hs} gives an a priori estimate for the
	$\dot{H}^\sigma-$norm on $[0,T]$, with at most exponential dependence on
	$T$. It does not, however, yield conservation of the \(\dot H^\sigma\)-seminorm for
	\eqref{BO-Dirac}. This differs from the scalar
	Benjamin--Ono equation ($n=1$ and $\sigma=0$), for which the $L^2-$norm is
	conserved (see, for instance, \cite[Subsection~4.4]{Tao06}).
\end{remark}

These bounds control the nonlinear terms in the Schr\"odinger system \eqref{Schr-Alfa} used below; in particular, they enter  Proposition~\ref{LeibnizSobolev-Theorem}.

We measure the resulting Strichartz estimates by working with the homogeneous seminorm
\(
\|\PSI\|_{L_T^q\dot{W}_x^{\sigma,p}},
\)
which corresponds to the admissible pair $(q,p)$, defined as follows:
\begin{definition}[Schr\"odinger admissible pair]\label{SchroedingerPair}
	For $n\geq 1$, we say that $\left(q,p\right)$ is sharp Schr\"odinger admissible if
	\begin{eqnarray}
		\label{AdmissibleCondition}
		2\leq p,q\leq \infty,~~~ \displaystyle \frac{2}{q}+\frac{n}{p}=\frac{n}{2} & \mbox{and}& (q,p,n)\neq (2,\infty,2).
	\end{eqnarray}
\end{definition}

The next main result will rely on the homogeneous and inhomogeneous Strichartz estimates for the Schr\"odinger equation (including the endpoint estimate of Keel and Tao \cite{KeelTao98}) and the estimates of Ginibre and Velo \cite{GinibreVelo92}. As for the nonlinear terms in \eqref{Schr-Alfa}, their control will follow from the a priori product estimates of Subsection \ref{Estimates-Nonlinear-Subsection}, mainly from Lemma \ref{GrafakosOh-Lemma}.

\begin{theorem}
	\label{StrichartzTheorem-PSI}
Let \(n\geq3\) and let \(\PSI\) solve \eqref{BO-Dirac} and satisfy~
\ref{AntiCommutation-Assumption}--\ref{Dyadic-Assumption}. Suppose that
	\(\displaystyle 
	\sigma\geq \frac{n}{2}-1
	\)
	and that
	\[
	\PSI\in
	C\bigl([0,T];H^{\sigma+1}(\BR^n;\BC\otimes\cl_{0,n})\bigr)
	\cap
	C\bigl([0,T];L^\infty(\BR^n;\BC\otimes\cl_{0,n})\bigr).
	\]

Assume further that
\begin{equation}
	\label{Smallness-Sup-Hcritical}
	\|\PSI\|_{L^\infty_T\dot{H}^{\frac{n}{2}-1}_{\X}}
	\leq
	\frac{1-2^{-(1+\varepsilon)}}{2C_{n,\sigma+1}},
\end{equation}
where \(C_{n,\sigma+1}>0\) denotes the constant in the Sobolev and fractional Leibniz estimates.

	Then, for every sharp Schr\"odinger admissible pair $(q,p)$ in the sense of Definition \ref{SchroedingerPair}, we obtain
	\begin{equation}
		\label{StrichartzEstimate-Hgamma}
		\|\PSI\|_{L_T^q \dot{W}^{\sigma,p}_{\X}}
		\lesssim
2^{\frac{n}{2}+1}\left(1+2\sqrt{T}\right)e^{-\frac{T}{2}}\mathcal{E}_{\sigma,T}(\PSI)
\left(
1+2\mathcal{E}_{\frac{n}{2}-1,T}(\PSI)
\right)^2,
	\end{equation}
	where $\mathcal{E}_{\sigma,T}(\PSI)$ and $\mathcal{E}_{\frac{n}{2}-1,T}(\PSI)$ ($\displaystyle \sigma=\frac{n}{2}-1$) are the constants defined by \eqref{Gronwall-Constant}.
\end{theorem}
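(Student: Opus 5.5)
The plan is to pass the estimate through the gauge map of Theorem~\ref{Theorem-Gauge} and to apply the Schr\"odinger Strichartz estimates to the system \eqref{Schr-Alfa}. The first step is to recover $\PSI$ from $\u$. Since $\u=\exp(-i\w)$ and $\w$ commutes with its spatial derivatives (Assumption~\ref{CommutingPhase-Assumption}), we have $\D\u=-i(\D\w)\u=-i\PSI\u$. Combined with the unit-modulus property \eqref{unit-modulus}, this gives
\[
\PSI=i(\D\u)\,\u^\dagger=i\bigl(\D\u_++\D\u_-\bigr)\u^\dagger .
\]
Taking $\dot W^{\sigma,p}$ norms and using the fractional Leibniz rule (Lemma~\ref{GrafakosOh-Lemma}), I aim to bound $\|\PSI\|_{L^q_T\dot W^{\sigma,p}}$ by $\|\u_\pm\|_{L^q_T\dot W^{\sigma+1,p}}$ times $\|\u\|_{L^\infty}=1$, plus a commutator-type term. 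That extra term puts the derivatives on $\u^\dagger$ and carries a factor $\|\PSI\|_{\dot H^{n/2-1}}$, via the Sobolev embedding at critical regularity. Because $\|\PSI\|_{L^\infty_T\dot H^{n/2-1}}$ is small by \eqref{Smallness-Sup-Hcritical}, this term can be absorbed into the left-hand side. The geometric factor $1-2^{-(1+\varepsilon)}$ should come from summing the dyadic pieces allowed by Assumption~\ref{Dyadic-Assumption}, and the absorption should produce the factor $\bigl(1+2\mathcal E_{\frac n2-1,T}\bigr)$.

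The second step is to apply, to each equation in \eqref{Schr-Alfa} and to $|\nabla|^{\sigma}\D\u_\pm$, the homogeneous and inhomogeneous Strichartz estimates. For $n\ge3$ these include the Keel--Tao endpoint and the Ginibre--Velo estimates. The dual pair on the right-hand side is chosen as $(\tilde q',\tilde p')=(1,2)$, that is, $L^1_TL^2_{\X}$. This gives
\[
\|\u_\pm\|_{L^q_T\dot W^{\sigma+1,p}}\lesssim \|\u_\pm(\cdot,0)\|_{\dot H^{\sigma+1}}+\bigl\|\P_\pm\bm N(\PSI,\u)\bigr\|_{L^1_T\dot H^{\sigma+1}} .
\]
The $\P_\pm$ are bounded on $\dot H^s$ because $\mathcal H$ is unitary. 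For the initial-data term, $\D\u=-i\PSI\u$ gives $\|\u(\cdot,0)\|_{\dot H^{\sigma+1}}\lesssim\|\PSI(\cdot,0)\|_{\dot H^\sigma}\bigl(1+\|\PSI(\cdot,0)\|_{\dot H^{n/2-1}}\bigr)$, again by Leibniz and Sobolev. This is bounded by $e^{-T/2}\mathcal E_{\sigma,T}$ times a factor built from $\mathcal E_{\frac n2-1,T}$.

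The third step estimates the nonlinearity. Since $\bm N=-2\P_-[\bm G(\PSI)]\u$ with $\bm G(\PSI)=-i2^{-n}|\PSI|_0^2$, Lemma~\ref{GrafakosOh-Lemma} and Proposition~\ref{LeibnizSobolev-Theorem} should give
\[
\|\bm N(t)\|_{\dot H^{\sigma}}\lesssim \|\PSI(t)\|_{\dot H^{\sigma}}\|\PSI(t)\|_{L^\infty}+\|\PSI(t)\|^2_{\dot H^{n/2-1}}\|\u(t)\|_{\dot H^{\sigma+1}} .
\]
An analogous bound at the next derivative level is handled using $\dot H^{\sigma+1}$ control of $\PSI$. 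I then insert Theorem~\ref{Theorem-Hs}, which gives $\sup_t\|\PSI\|_{\dot H^\sigma}\lesssim\mathcal E_{\sigma,T}$ and likewise at the critical index. The time integral over $[0,T]$, estimated by Cauchy--Schwarz in time, produces the factor $1+2\sqrt T$. Collecting terms, with $2^{n/2+1}$ coming from the $2^{-n}$ normalisation and the Clifford norm constants, should yield \eqref{StrichartzEstimate-Hgamma}.

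The main obstacle is the first step. Passing from $\u_\pm$ back to $\PSI$ requires closing a bootstrap-type inequality in which $\PSI$ appears on both sides through $\u^\dagger$. It also requires tracking the Leibniz constant $C_{n,\sigma+1}$ precisely enough that the smallness \eqref{Smallness-Sup-Hcritical} makes the absorption legitimate. I would first try to treat $\|\u\|_{\dot W^{\sigma+1,p}}$ directly as the unknown, obtain a linear inequality with small coefficient, and only afterwards convert back to $\PSI$.
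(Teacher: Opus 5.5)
Your overall route matches the paper's. You write $\PSI=i(\D\u)\u^\dagger$ and pass from $\u$ to $\PSI$ with Lemma~\ref{GrafakosOh-Lemma} and Hardy--Littlewood--Sobolev (Proposition~\ref{Tecnical-Lemma2}). You apply Strichartz to \eqref{Duhamel-upm} with $(\widetilde q,\widetilde p)=(\infty,2)$, so the forcing is measured in $L^1_T\dot H^{\sigma+1}_{\X}$. You bound the forcing by Proposition~\ref{LeibnizSobolev-Theorem} and Theorem~\ref{Theorem-Hs} at the critical index.

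The gap is that you have placed the smallness hypothesis \eqref{Smallness-Sup-Hcritical} in the wrong direction. As a result, one step is described as an absorption that cannot be performed, and another as a direct estimate that is in fact circular.

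In the step $\u\to\PSI$ nothing is absorbed. The extra Leibniz term is $\|\D\u\|_{L^n}\|\u\|_{\dot W^{\sigma,\frac{np}{n-p}}}\lesssim 2^{\frac n2}\|\PSI\|_{\dot H^{\frac n2-1}}\|\u\|_{\dot W^{\sigma+1,p}}$. This is not a multiple of the left-hand side $\|\PSI\|_{\dot W^{\sigma,p}}$. It is simply kept and yields the factor $1+\mathcal E_{\frac n2-1,T}(\PSI)$, so the step you call the main obstacle involves no bootstrap.

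The absorption is needed in the opposite direction $\PSI\to\u$ (Proposition~\ref{Tecnical-Prop3}). In $\|\u\|_{\dot H^{s+1}}=\|\PSI\u\|_{\dot H^{s}}$ the Leibniz remainder $\|\PSI\|_{L^n}\|\u\|_{\dot W^{s,\frac{2n}{n-2}}}\lesssim\|\PSI\|_{\dot H^{\frac n2-1}}\|\u\|_{\dot H^{s+1}}$ contains the unknown itself. Smallness moves it to the left, giving $\|\u\|_{\dot H^{s+1}}\lesssim 2^{\frac n2+1}\|\PSI\|_{\dot H^{s}}$. Your initial-datum bound ``again by Leibniz and Sobolev'' does not close without exactly this step (with $s=\sigma$). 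The same step with $s=\sigma+1$ converts the $\|\u(\cdot,t)\|_{\dot H^{\sigma+2}}$ arising in $\|\bm N\|_{\dot H^{\sigma+1}}$ into $\|\PSI(\cdot,t)\|_{\dot H^{\sigma+1}}$. Consequently no inequality for $\|\u\|_{L^q_T\dot W^{\sigma+1,p}_{\X}}$ with a small coefficient is needed. After these pointwise-in-time conversions, the right-hand side of the Strichartz bound involves only $\|\PSI(\cdot,0)\|_{\dot H^\sigma}$, $\|\PSI\|_{L^\infty_T\dot H^{\sigma+1}_{\X}}$, $\|\PSI\|_{L^\infty_TL^\infty_{\X}}$ and $\mathcal E_{\frac n2-1,T}(\PSI)$.

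Some bookkeeping also needs correcting.
\begin{itemize}
\item The forcing must be estimated in $\dot H^{\sigma+1}$, not $\dot H^{\sigma}$.
\item The second term of your nonlinear bound is wrong: $\|\bm G(\PSI)\|_{L^n}$ cannot be controlled by $\|\PSI\|_{\dot H^{\frac n2-1}}^2$, since the two sides scale differently under $\PSI(\X)\mapsto\PSI(\lambda\X)$. The paper uses $\|\PSI\|_{\dot H^{\frac n2-1}}\|\PSI\|_{L^\infty}$.
\item The $\sqrt T$ does not come from Cauchy--Schwarz in time. The time integral gives a factor $T$. Then $T\|\PSI\|_{L^\infty_T\dot H^{\sigma+1}_{\X}}\|\PSI\|_{L^\infty_TL^\infty_{\X}}\le e^{-\frac T2}\sqrt T\,\mathcal E_{\sigma,T}(\PSI)$ and $\|\PSI(\cdot,0)\|_{\dot H^\sigma}\le e^{-\frac T2}\mathcal E_{\sigma,T}(\PSI)$ follow directly from \eqref{Gronwall-Constant}. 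The Gronwall bound of Theorem~\ref{Theorem-Hs} is used only at the index $\frac n2-1$.
\item The square $(1+2\mathcal E_{\frac n2-1,T})^2$ is the product of the conversion factor $1+\mathcal E_{\frac n2-1,T}$ and the factor $1+2\mathcal E_{\frac n2-1,T}$ from Proposition~\ref{LeibnizSobolev-Theorem}.
\item The powers of $2$ come from $\|\u(\cdot,t)\|_{L^\infty}=2^{\frac n2}$ (not $1$, since $|1|_0=2^{\frac n2}$) and from absorbing with coefficient at most $\frac12$.
\end{itemize}

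Finally, there is a limitation shared with the paper. The embedding $\dot W^{\sigma+1,p}\hookrightarrow\dot W^{\sigma,\frac{np}{n-p}}$ in the conversion step needs $p<n$, as Proposition~\ref{Tecnical-Lemma2} assumes. For $n=3,4$ this excludes part of the sharp admissible range $2\le p\le\frac{2n}{n-2}$.
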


\begin{remark}\label{Well-Posedness-Hs}	
	We do not address the well-posedness problem for \eqref{BO-Dirac}. In
	particular, we do not construct solutions satisfying Assumptions \ref{AntiCommutation-Assumption}--\ref{Dyadic-Assumption}
	or prove that these assumptions are preserved by the flow.
\end{remark}

\subsection{Organization of the paper}

The paper is organized as follows. Section~\ref{GeneralSetup} introduces the Clifford
algebra setting, the associated function spaces, and the
Littlewood--Paley decomposition, and recalls the pseudo-differential
description of the Dirac operator and the Riesz--Hilbert transform.
Section~\ref{Technical-Section} develops the estimates used in the gauge argument. In
particular, the dyadic assumptions yield the required regularity of
\(\bm \psi\), while Assumptions~\ref{Dyadic-Assumption} and~\ref{CommutingPhase-Assumption} give the gauge properties and
the comparison and product estimates. Section~\ref{Gauge-Section} proves the gauge
correspondence and the a priori Sobolev estimate. Section~\ref{Strichartz-Section} applies
Strichartz estimates to prove Theorem~\ref{StrichartzTheorem-PSI}.

\subsection{Notations for inequalities}\label{Notations-Conventions}
Throughout the paper, the notation \(A \lesssim B\) (respectively \(A \gtrsim B\)) means that
there exists a positive constant \(C > 0\), independent of $A$ and $B$,
such that \(A \leq C B\) (respectively \(A \geq C B\)). In addition, we write \(A \sim B\) when \(A \lesssim B\) and \(B \lesssim A\).

\section{Preliminaries}\label{GeneralSetup}


\subsection{Clifford algebras}\label{CliffordAlgebras}

Let $\cl_{0,n}$ denote the real Clifford algebra of signature $(0,n)$ generated by the basis $\e_1,\e_2,\ldots,\e_n$ of $\BR^n$. The multiplication in $\cl_{0,n}$ is determined by the anti-commutator relations
\begin{eqnarray}
	\label{CliffordBasis}\e_j\e_k + \e_k\e_j =-2\delta_{jk},  \qquad j,k=1,2,\ldots,n.
\end{eqnarray}

Notice that $\cl_{0,n}$ is an associative algebra of dimension $2^n$, whose identity element is $\e_\emptyset := 1$. Moreover, $\BR$ and $\BR^n$ may both be naturally identified with subspaces of $\cl_{0,n}$. Accordingly, the vectors $\X=(x_1,x_2,\ldots,x_n)$ and $\XI=(\xi_1,\xi_2,\ldots,\xi_n)$ in $\BR^n$ will be represented in $\cl_{0,n}$ as
\begin{eqnarray}
	\label{Clifford-X-XI}\X = \sum_{j=1}^n x_j \e_j \quad \text{and} \quad \XI = \sum_{j=1}^n \xi_j \e_j.
\end{eqnarray}

In view of \eqref{CliffordBasis}, the Euclidean inner product
\[
\langle \X, \XI \rangle := \sum_{j=1}^n x_j\xi_j
\]
can equivalently be written in terms of the anti-commutator of $\X$ and $\XI$:
\[
\langle \X, \XI \rangle = -\frac{1}{2} \Bigl( \X\XI + \XI\X \Bigr).
\]

The Clifford algebra $\cl_{0,n}$ is canonically isomorphic to the exterior algebra
\[
\bigwedge^*(\mathbb{R}^n):= \bigoplus_{r=0}^n \bigwedge^r (\mathbb{R}^n),
\]
via the linear extension of the correspondence
\[
\e_{j_1}\e_{j_2}\cdots \e_{j_r} \mapsto dx_{j_1} \wedge dx_{j_2} \wedge \cdots \wedge dx_{j_r}, \quad 1 \leq j_1 < j_2 < \cdots < j_r \leq n
\]
(see, for instance, \cite[Chapter~4]{VazRocha16}). This identification lets us treat solutions to the Benjamin--Ono-type Dirac equation \eqref{BO-Dirac} in a unified way.

More precisely, let $J = \{j_1,j_2,\ldots,j_r\}$ be a subset of $\{1,2,\ldots,n\}$ of cardinality $|J|= r$. We seek solutions expressed as
\begin{eqnarray}
	\label{CliffordPsi}
	\PSI(\X,t) = \sum_{r=0}^n \sum_{|J|=r} \psi_J(\X,t) \e_J, \qquad (\X,t)\in \BR^n \times [0,T],
\end{eqnarray}
where $\e_J := \e_{j_1}\e_{j_2}\cdots \e_{j_r}$ and each function $\psi_J$ is complex-valued.

There are three canonical automorphisms that keep the Clifford-valued structure of $\BC\otimes\cl_{0,n}$. They are defined as below:
\begin{itemize}
	\item The {\it main involution} $\a \mapsto \a'$ is defined
	recursively via
	\begin{eqnarray}
		\label{involution}
		\begin{array}{lll}
			(\a \b)'=\a'\b', \\[0.2cm]
			(a_J \e_J)'
			=\overline{a_J}~\e_{j_1}'\e_{j_2}'\ldots \e_{j_r}', \\[0.2cm]
			\e_j'=-\e_j.
		\end{array}
	\end{eqnarray}
	\item The {\it reversion} $\a \mapsto \a^*$ is defined recursively
	via
	\begin{eqnarray}
		\label{reversion}
		\begin{array}{lll}
			(\a \b)^*=\b^*\a^*, \\[0.2cm]
			(a_J \e_J)^*
			=a_J~\e_{j_r}^*\ldots \e_{j_2}^*\e_{j_1}^*, \\[0.2cm]
			\e_j^*=\e_j.
		\end{array}
	\end{eqnarray}
	\item The {\it conjugation} $\a \mapsto \a^\dag$ is defined
	recursively via
	\begin{eqnarray}
		\label{conjugation}
		\begin{array}{lll}
			(\a \b)^\dag=\b^\dag\a^\dag, \\[0.2cm]
			(a_J \e_J)^\dag =\overline{a_J}~\e_{j_r}^\dag
			\ldots \e_{j_2}^\dag\e_{j_1}^\dag, \\[0.2cm]
			\e_j^\dag=-\e_j.
		\end{array}
	\end{eqnarray}
\end{itemize}

The conjugation in \eqref{conjugation} is the composition of the main involution \eqref{involution} and the reversion \eqref{reversion}. Namely,
\[
(\a')^*=(\a^*)'=\a^\dagger,
\qquad
\a\in\BC\otimes\cl_{0,n}.
\]
For a Clifford vector \(\X\), as in \eqref{Clifford-X-XI}, this gives
\[
\X^*=\X,
\qquad
\X^\dagger=\X'=-\X.
\]

Also, for \(\a\in\BC\otimes\cl_{0,n}\), the scalar part of
\(\a^\dagger\a\) is non-negative, and it is zero only when
\(\a=0\). In addition,
\[
\a^\dagger\a=\a\a^\dagger.
\]

These observations motivate the definition of the Clifford norm $|\cdot|_0$ given next.

\subsection{Clifford-valued function spaces}\label{CliffordFunctionSpaces}


Given two arbitrary Clifford numbers
$\a,\b\in\BC \otimes\cl_{0,n}$, we define

\begin{eqnarray}
\label{InnerProduct-0}(\a,\b)_{0}
:=2^{n}\,\bigl[~\a^\dagger\,\b~\bigr]_{0},
\qquad
\vert~\a~\vert_{0}
:=\sqrt{(\a,\a)_{0}}
=2^{\frac{n}{2}}\sqrt{\bigl[~\a^\dagger\,\a~\bigr]_{0}},
\end{eqnarray}

where \(\left[~\cdot~\right]_0 : \BC\otimes \cl_{0,n}\;\longrightarrow\;\BC\) denotes the projection operator and $\dagger$ the conjugation operation in $\BC\otimes \cl_{0,n}$.

With the normalization $\left\vert~ \e_{\emptyset}~\right\vert_{0}=2^{\frac{n}{2}}$, the algebra
$\cl_{0,n}$, endowed with inner product
$\left(\cdot,\cdot\right)_{0}$ and norm $\vert\cdot\vert_{0}$, becomes both a real
(or complex) Hilbert space and a Banach algebra.  In particular one
checks the following set of inequalities (see~\cite[Chapter 1]{BDS82}):
\begin{enumerate}
	\item[\bf (i)] 
	(\textbf{Product norm inequality}) $$
	\left\vert~{\bm a} {\bm b}~\right\vert_0 \leq \left\vert~{\bm a}~\right \vert_0 \left\vert~{\bm b}~\right\vert_0,~~{\bm a},{\bm b}\in \BC\otimes\cl_{0,n};
	$$
	\item[\bf (ii)] (\textbf{Triangle Inequality})
	$$\left\vert~{\bm a}  + {\bm b}~\right\vert_0  \leq \left\vert~{\bm a}~\right\vert_0 + \left\vert~{\bm b}~\right\vert_0, ~~{\bm a},{\bm b}\in \BC\otimes\cl_{0,n}.$$ 
\end{enumerate}

For $1\leq p\leq \infty$, we define the \emph{Banach right–module} $L^p(\BR^n;\BC\otimes \cl_{0,n})$
endowed with the norm
\begin{eqnarray}
\label{Lp-Norm} \| \f \|_{L^p}
:= 
\begin{cases}
	\displaystyle
	\left(\int_{\mathbb{R}^n} \left|~\f(\X)~\right|_0^p \dx\right)^{\frac{1}{p}}
	&,~1\le p<\infty,\\
	\displaystyle			\operatorname*{ess\,sup}_{\X\in \BR^n} \left|~\f(\X)~\right|_0&,~p=\infty.
\end{cases}
\end{eqnarray}

Observe that $\f \in L^p(\BR^n;\BC\otimes \cl_{0,n})$ if and only if the real-valued function $\X \mapsto |\f(\X)|_0$ lies in $L^p(\BR^n)$.
In the case $p=2$, $L^p(\BR^n;\BC\otimes \cl_{0,n})$ becomes a unitary right $\BC \otimes\cl_{0,n}-$module with sesquilinear form
\begin{eqnarray}
	\label{InnerP}
	\langle \f, \g\rangle:= 2^n \int_{\mathbb{R}^n} \big(\f(\X)\big)^{\dagger} \g(\X) \dx.
\end{eqnarray} 
and induced norm provided by
\begin{eqnarray}
\label{L2-Norm}\left\langle \f, \f\right\rangle_0 := 2^n \int_{\mathbb{R}^n} \left[~\big(\f(\X)\big)^{\dagger} \f(\X) ~\right]_0 \dx = \int_{\mathbb{R}^n} \vert \f(\X)\vert^2_0 \dx = \|\f \|_{L^2}^2.
\end{eqnarray}

\begin{remark}
The Clifford norm, defined by $\a\mapsto\sqrt{\a^\dagger\a}$, fail to be submultiplicative. In the present setting, however, the submultiplicativity of the Clifford absolute value $|\cdot|_0$ is essential, as it allows one to establish the fundamental properties of the corresponding $L^p$-spaces, including H\"older’s and Young’s inequalities, following~\cite[Remark~4]{BernsteinFaustino25}.
\end{remark}

We write $\mathcal{S}(\BR^n)$ for the Schwartz space and $\mathcal{S}'(\BR^n)$ for the space of tempered distributions. The Clifford-valued counterparts of these function spaces, denoted by
\begin{eqnarray*}
	\mathcal{S}(\BR^n;\BC\otimes \cl_{0,n})&:=&\mathcal{S}(\BR^n)\otimes (\BC\otimes \cl_{0,n}), \\
	\mathcal{S}'(\BR^n;\BC\otimes \cl_{0,n})&:=&\mathcal{S}'(\BR^n)\otimes (\BC\otimes \cl_{0,n}),
\end{eqnarray*} 
respectively,
consist of those $\BC\otimes \cl_{0,n}$‐valued functions $\f$ for which the scalar-valued function $\X\mapsto |\f(\X)|_0$ belongs to $\mathcal{S}(\BR^n)$ resp. $\mathcal{S}'(\BR^n)$.

Now, let
\begin{eqnarray}
	\label{FourierTransform}
	\widehat{\f}(\XI):=(\mathcal{F} \f)(\XI)=\int_{\BR^n} \f(\X)e^{-i \langle \X,\XI \rangle}d\X, & ~\XI\in \BR^n~
\end{eqnarray}
be the Fourier transform over $\BR^n$ and by
\begin{eqnarray}
	\label{FourierInverse}(\mathcal{F}^{-1} \g)(\X)=\frac{1}{(2\pi)^n}\int_{\BR^n} \g(\XI)e^{i \langle \X,\XI \rangle}d\XI, & ~\XI\in \BR^n~
\end{eqnarray}
its inverse.

For \(1\leq p<\infty\), the space \(\mathcal{S}(\BR^n;\BC\otimes\cl_{0,n})\) is dense in \(L^p(\BR^n;\BC\otimes\cl_{0,n})\) (cf. \cite[Proposition 2.4.23]{hytonen2016analysis}). The sesquilinear form \eqref{InnerP} then allows us to extend the topological isomorphism $$\mathcal{F}:\mathcal{S}(\BR^n;\BC\otimes\cl_{0,n})\longrightarrow \mathcal{S}(\BR^n;\BC\otimes \cl_{0,n})$$ to $L^2(\BR^n;\BC\otimes \cl_{0,n})$ by using density arguments, and to $\mathcal{S}'(\BR^n;\BC\otimes\cl_{0,n})$ via the duality relation
\begin{eqnarray*}
	\langle \widehat{\f},\g \rangle=\langle \f,\widehat{\g}\rangle, & \f\in \mathcal{S}(\BR^n;\BC\otimes\cl_{0,n}), & \g\in \mathcal{S}'(\BR^n;\BC\otimes \cl_{0,n}).
\end{eqnarray*}

In particular, Plancherel's identity \cite[Theorem 2.4.9]{hytonen2016analysis}, 
\begin{equation}\label{Plancherel} \|\f\|_{L^2} =\frac{1}{ (2\pi)^{\frac{n}{2}}}\|\widehat{\f}\|_{L^2}, 
\end{equation} 
follows because the normalized Fourier transform \(\displaystyle \f\mapsto \frac{1}{ (2\pi)^{\frac{n}{2}}}\widehat{\f}\) extends from \(\mathcal{S}(\BR^n;\BC\otimes\cl_{0,n})\) to a unitary operator on \(L^2(\BR^n;\BC\otimes\cl_{0,n})\).

Next, for $1\leq p\leq \infty$ and $s\in \BR$, we define by 
\begin{eqnarray}
	\label{Sobolev}	W^{s,p}(\BR^n;\BC\otimes\cl_{0,n}):=\left\{~\f\in \mathcal{S}'(\BR^n;\BC\otimes\cl_{0,n})~:~\left\|~\f~\right\|_{W^{s,p}}<\infty~\right\}
\end{eqnarray}
the Sobolev spaces of $\BC\otimes\cl_{0,n}-$valued functions induced by the norm 
\begin{eqnarray}
	\label{Sobolev-Norm-Wsp} \displaystyle \left\|~\f~\right\|_{{W}^{s,p}} :=\left\|~(I-\Delta)^{\frac{s}{2}}\f~\right\|_{L^p},
\end{eqnarray}
and by
\begin{eqnarray}
	\label{SobolevHomogeneous}	\dot{W}^{s,p}(\BR^n;\BC\otimes\cl_{0,n}):=\left\{~\f\in \mathcal{S}'(\BR^n;\BC\otimes\cl_{0,n})~:~\left\|~\f~\right\|_{\dot{W}^{s,p}}<\infty~\right\}
\end{eqnarray}
the {\it homogeneous Sobolev spaces} of $\BC\otimes\cl_{0,n}-$valued functions induced by the seminorm 
\begin{eqnarray}
	\label{SobolevHomogeneous-Norm-Wsp} \displaystyle \left\|~\f~\right\|_{\dot{W}^{s,p}} :=\left\|~(-\Delta)^{\frac{s}{2}}\f~\right\|_{L^p}.
\end{eqnarray}

Here and in what follows $(I-\Delta)^{\frac{s}{2}}$ and $(-\Delta)^{\frac{s}{2}}$ are the fractional Riesz and Bessel operator of order $s$, respectively defined by the spectral formulas
\begin{eqnarray}
	\label{BesselOperators}
	\mathcal{F}\left((I-\Delta)^{\frac{s}{2}}\f\right)(\XI)=\left(1+|\XI|^2\right)^{\frac{s}{2}} \widehat{\f}(\XI), & \XI\in \BR^n, \\
	\label{RieszOperators}
	\mathcal{F}\left((-\Delta)^{\frac{s}{2}}\f\right)(\XI)=|\XI|^s \widehat{\f}(\XI), & \XI\in \BR^n.
\end{eqnarray}

Thus, these spaces generalize the \emph{Banach right--modules} $L^p(\BR^n;\BC\otimes \cl_{0,n})$ introduced previously, corresponding to the case $s=0$. Moreover, for $1\leq p\leq \infty$ and $\sigma \geq s >0$, one has the inequalities
\begin{eqnarray}
	\label{normIneq-Wsp} \|\f\|_{L^p}\lesssim \|\f\|_{W^{s,p}} ,& \|\f\|_{\dot{W}^{s,p}}\lesssim \|\f\|_{W^{s,p}}, & \|\f\|_{W^{\sigma,p}} \leq \|\f\|_{W^{s,p}},
\end{eqnarray}
and the norm equivalence 
\begin{eqnarray}
	\label{normEquivalence-Wsp} \|\f\|_{W^{s,p}}\sim \|\f\|_{L^p}+\|\f\|_{\dot W^{s,p}}.
\end{eqnarray}

This is a standard consequence of the Fourier-multiplier description of
Bessel potential spaces; see, for instance,
\cite[Subsection 6.2.1]{GrafakosVol250}. More precisely, if
\(0\leq \sigma\leq s\), then the symbols
\((1+|\XI|^2)^{\frac{s}{2}}\), \(|\XI|^s\), and
\((1+|\XI|^2)^{\frac{\sigma}{2}}\), associated with
\((I-\Delta)^{\frac{s}{2}}\), \((-\Delta)^{\frac{s}{2}}\), and
\((I-\Delta)^{\frac{\sigma}{2}}\), respectively, satisfy
\begin{eqnarray*}
	\begin{array}{lllll}
		1 \leq (1+|\XI|^2)^{\frac{s}{2}}, &  |\XI|^s \leq (1+|\XI|^2)^{\frac{s}{2}}, & 	(1+|\XI|^2)^{\frac{\sigma}{2}} \leq (1+|\XI|^2)^{\frac{s}{2}}, &\mbox{for all}&\XI\in \BR^n,
	\end{array}
\end{eqnarray*}
which yields \eqref{normIneq-Wsp}
so that 
\begin{eqnarray*}
	\|\f\|_{L^p}+\|\f\|_{\dot W^{s,p}}\lesssim\|\f\|_{W^{s,p}}.
\end{eqnarray*}

Conversely, the reverse estimate
\begin{eqnarray*}
	\|\f\|_{W^{s,p}}\lesssim	\|\f\|_{L^p}+\|\f\|_{\dot W^{s,p}}
\end{eqnarray*}
follows from a standard Mikhlin multiplier argument.

From now on, we will adopt mostly the notations $\left\|~\cdot~\right\|_{{W}^{s,p}}$ and $\left\|~\cdot~\right\|_{\dot{W}^{s,p}}$ when we refer to the norm of \eqref{Sobolev} and the seminorm of \eqref{SobolevHomogeneous}, respectively.

Also, the $H^s$-norm and $\dot{H}^s-$seminorm notations
\begin{center}
	$\left\|~\cdot~\right\|_{{H}^s}:=\left\|~\cdot~\right\|_{{W}^{s,2}}$ resp. $\left\|~\cdot~\right\|_{\dot{H}^s}:=\left\|~\cdot~\right\|_{\dot{W}^{s,2}}$
\end{center}
will be adopted when we are referring to $\displaystyle {H}^{s}(\BR^n;\BC\otimes\cl_{0,n}):={W}^{s,2}(\BR^n;\BC\otimes \cl_{0,n})$ and $\displaystyle \dot{H}^{s}(\BR^n;\BC\otimes\cl_{0,n}):=\dot{W}^{s,2}(\BR^n;\BC\otimes \cl_{0,n})$, respectively. 

In particular, the set of identities
\begin{eqnarray}
	\label{Sobolev-Hs} \left\|~\f~\right\|_{{H}^{s}}&=&\frac{1}{(2\pi)^{\frac{n}{2}}}\left\|~\left(1+|\cdot|^2\right)^{\frac{s}{2}}\widehat{\f}~\right\|_{L^2}, \\
	\label{Sobolev-Homogeneous-Hs} \left\|~\f~\right\|_{\dot{H}^{s}}&=&\frac{1}{(2\pi)^{\frac{n}{2}}}\left\|~|\cdot|^{s}\widehat{\f}~\right\|_{L^2}
\end{eqnarray}
follow from~\eqref{Plancherel}, \eqref{BesselOperators} and \eqref{RieszOperators}.

\begin{remark}
	Here, \(W^{s,p}\) denotes the Bessel potential space in the sense of
	\cite{Tao06}, with the same convention for
	\(\BC\otimes\cl_{0,n}\)-valued functions. The homogeneous space
	\(\dot{W}^{s,p}\) is defined by the Fourier multiplier
	\(|\XI|^s\), rather than by the Gagliardo seminorm
	\(\left[\,\cdot\,\right]_{W^{s,p}}\); see, for example,
	\cite[Subsection~2.5.d]{hytonen2016analysis}.
\end{remark}

We shall use throughout the paper the following fractional product estimate for
functions taking values in $\BC\otimes\cl_{0,n}$.

\begin{lemma}[Homogeneous fractional Leibniz estimate]
	\label{GrafakosOh-Lemma}
	Let $s\geq0$, and let
	\[
	1<p,q_1,q_2,r_1,r_2\leq\infty
	\]
	satisfy
	\[
	\frac1p
	=
	\frac1{q_1}+\frac1{q_2}
	=
	\frac1{r_1}+\frac1{r_2}.
	\]
	Then, for all
	\[
	\f,\g\in\mathcal{S}(\mathbb{R}^n;\BC\otimes\cl_{0,n}),
	\]
	one has
	\begin{equation}
		\label{LeibnizEstimate2}
		\|\f\g\|_{\dot W^{s,p}}
		\lesssim
		\|\f\|_{\dot W^{s,q_1}}\|\g\|_{L^{q_2}}
		+
		\|\f\|_{L^{r_1}}\|\g\|_{\dot W^{s,r_2}}.
	\end{equation}
\end{lemma}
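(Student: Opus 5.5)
The plan is to reduce the Clifford-valued estimate to the scalar fractional Leibniz rule of Grafakos--Oh (the homogeneous Kato--Ponce inequality) by expanding both factors in the basis $\{\e_J\}$.

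Write $\f=\sum_J f_J\e_J$ and $\g=\sum_K g_K\e_K$ with complex Schwartz coefficients $f_J,g_K\in\mathcal{S}(\BR^n)$. Then
\[
\f\g=\sum_{J,K} f_Jg_K\,\e_J\e_K .
\]
Each product $\e_J\e_K$ equals $\pm\e_{J\triangle K}$, a constant Clifford number. The multiplier $(-\Delta)^{\frac s2}$ acts componentwise, so it commutes with right multiplication by constants. By the triangle inequality and the product norm inequality for $|\cdot|_0$ this gives
\[
\|\f\g\|_{\dot W^{s,p}}\le \sum_{J,K}|\e_J\e_K|_0\,\|(-\Delta)^{\frac s2}(f_Jg_K)\|_{L^p(\BR^n)} .
\]
This is a finite sum of $4^n$ terms with dimensional constants.

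The core step is to apply to each scalar product the Grafakos--Oh estimate
\[
\|(-\Delta)^{\frac s2}(fg)\|_{L^p}\lesssim \|(-\Delta)^{\frac s2}f\|_{L^{q_1}}\|g\|_{L^{q_2}}+\|f\|_{L^{r_1}}\|(-\Delta)^{\frac s2}g\|_{L^{r_2}} .
\]
This is valid for $s>0$ (and for $s\ge 0$ in the stated sense), for $\frac12<p\le\infty$, $1<q_j,r_j\le\infty$, and for $p\ge1$ in our case. The case $s=0$ is simply Hölder's inequality. For the endpoint exponents ($p=\infty$, or some $q_j,r_j=\infty$), the statement needs the Bourgain--Li / Grafakos--Maldonado--Naibo extensions.

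This exponent bookkeeping is the main obstacle. I would either cite those extensions directly, or, if the paper only needs finite exponents, restrict to $p<\infty$ and note that the $\infty$ exponents on the undifferentiated factor are covered by Grafakos--Oh.

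Finally, I return to Clifford norms. For each coefficient,
\[
\|(-\Delta)^{\frac s2}f_J\|_{L^{q_1}}\lesssim\|\f\|_{\dot W^{s,q_1}},\qquad \|g_K\|_{L^{q_2}}\lesssim\|\g\|_{L^{q_2}} .
\]
Both follow because $|a_J|\le 2^{-\frac n2}|\a|_0$ componentwise, via $|\a|_0^2=2^n\sum_J|a_J|^2$; this equality holds since the $\e_J$ are orthonormal for $[\a^\dagger\b]_0$. The same bounds hold with $r_1,r_2$ in place of $q_1,q_2$. Summing over $J,K$ then yields \eqref{LeibnizEstimate2}, with an implicit constant depending only on $n,s,p,q_j,r_j$.
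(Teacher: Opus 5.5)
Your proposal is correct and follows the same route as the paper. The paper's proof reduces to the scalar estimates of Kenig--Ponce--Vega and Grafakos--Oh (H\"older for $s=0$) and then returns to Clifford norms using the properties of $|\cdot|_0$; your componentwise expansion, with $(-\Delta)^{\frac s2}$ acting coefficientwise and $|\a|_0^2=2^n\sum_J|a_J|^2$, spells out that reduction, which the paper only indicates through submultiplicativity. You are also right that the endpoint $p=\infty$ (where all exponents must be infinite) is not covered by Grafakos--Oh and needs Bourgain--Li, although the paper only ever applies the lemma with finite $p$.
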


When $s = 0$,~\eqref{LeibnizEstimate2} reduces to H\"older's inequality.
For $s > 0$, it follows from the fractional product estimates of Kenig--Ponce--Vega~\cite{KenigPonceVega93}, Kato--Ponce inequality of Grafakos-Oh~\cite{GrafakosOh14} and the submultiplicativity of $|\cdot|_0$ obtained above.

Moreover, on the space-time \( \BR^n \times [0,T]\), with $T>0$, we work with the mixed-(semi)norm spaces \[L^q_T B_{\X}:=L^q([0,T],B(\BR^{n};\BC\otimes \cl_{0,n}))\] for \(1 \leq q \leq \infty\), where \(B(\BR^n;\BC\otimes \cl_{0,n})\) denotes one of the Clifford-valued function spaces introduced above. Their (semi)norms are given by
\begin{eqnarray}\label{mixedNorm}
	\| \u\|_{L^q_T B_{\X}} :=	
	\begin{cases}
		\displaystyle
		\left(\int_{0}^T \|\u(\cdot,t)\|_{B}^q \, dt\right)^{\frac{1}{q}},
		& 1 \le q < \infty,\\[1.0ex]
		\displaystyle
		\operatorname*{ess\,sup}_{t\in [0,T]}\|\u(\cdot,t)\|_{B},
		& q=\infty.
	\end{cases}
\end{eqnarray}

\subsection{Littlewood--Paley theory}\label{LittlewoodPaley-Subsection}

We now recall the Littlewood--Paley decomposition used throughout the paper. Let us choose $\eta\in C^\infty_0(\BR^n)$ such that
\begin{eqnarray*}
	\eta(\XI)=\eta(|\XI|), & 0\leq\eta(\XI)\leq1,~\mbox{for all}~ \XI\in\BR^n,
\end{eqnarray*}
and
\begin{eqnarray*}
	\operatorname{supp}(\eta)\subseteq\{\XI\in\BR^n:|\XI|\leq2\}, &
	\eta(\XI)=1~\mbox{for all}~ |\XI|\leq1.
\end{eqnarray*}

For $j\in\BN_0$, set
\begin{eqnarray}
	\label{Dyadic-phi2j}
	\phi_{2^j}(\XI)=\begin{cases}
		\eta(2\XI), &,~j=0,\\
		\eta(2^{-j}\XI)-\eta(2^{-j+1}\XI), &,~j\geq1.
	\end{cases}
\end{eqnarray}

Then
\begin{eqnarray}
	\label{Partition-phi2j}
	\sum_{j=0}^\infty\phi_{2^j}(\XI)=1,
	&\mbox{for all}&\XI\in\BR^n,
\end{eqnarray}
and
\begin{eqnarray}
	\label{supp-lo}
	\operatorname{supp}(\phi_{1})
	\subseteq\{\XI\in\BR^n:|\XI|\leq 1\},
	& \mbox{for}& j=0,
	\\
	\label{supp-hi}
	\operatorname{supp}(\phi_{2^j})
	\subseteq\{\XI\in\BR^n:2^{j-1}\leq|\XI|\leq2^{j+1}\},
	& \mbox{for}& j\geq1.
\end{eqnarray}

Define
\begin{eqnarray}
	\label{LittlewoodPaleyPj}
	P_j\f=\mathcal{F}^{-1}(\phi_{2^j}\widehat{\f}),
	& j\in\BN_0.
\end{eqnarray}

Thus, for $j\geq1$, the Fourier transform of $P_j\f$ is supported in an
annulus of radius $|\XI|\sim 2^j$.

For $1\leq p\leq q\leq\infty$, $s\geq0$, and $j\geq1$, one has the Bernstein estimates
\begin{eqnarray}
	\label{BernsteinEquiv}
	\left\|P_j(-\Delta)^{\pm\frac{s}{2}}\f\right\|_{L^p}
	\sim
	2^{\pm js}\|P_j\f\|_{L^p}.
\end{eqnarray}
and 
\begin{eqnarray}
	\label{BernsteinInequality}
	\|P_j\f\|_{L^q}
	\lesssim
	2^{jn\left(\frac{1}{p}-\frac{1}{q}\right)}
	\|P_j\f\|_{L^p},
	\qquad
	j\in\BN_0.
\end{eqnarray}

The constants in \eqref{BernsteinEquiv} and \eqref{BernsteinInequality} are
independent of $j$ and $\f$.

For $s\geq0$ and $p=2$, Plancherel's identity \eqref{Plancherel} yields the norm equivalence
\begin{eqnarray}
	\label{normEquivalence-LPaley}
	\|\f\|_{H^s}^2
	\sim
	\sum_{j=0}^\infty
	2^{2js}\|P_j\f\|_{L^2}^2.
\end{eqnarray}

Also, taking $p=2$ and $q=\infty$ in
\eqref{BernsteinInequality}, and using \eqref{Partition-phi2j}, gives the norm inequality
\begin{eqnarray}
	\label{normInequality-LPaley}
	\|\f\|_{L^\infty}
	\lesssim
	\sum_{j=0}^\infty
	2^{\frac{jn}{2}}\|P_j\f\|_{L^2}.
\end{eqnarray}

\subsection{Dirac operator vs. Riesz--Hilbert transform}\label{Subsection-FractionalDirac-RieszHilbert}	

We now recall the frameword developed in \cite{BernsteinFaustino25,Faustino24}, following the approach of \cite{Bernstein2016}.

The identity 
\begin{eqnarray}
	\label{Factorization-D}\D^2=-\Delta,
\end{eqnarray} relates the Dirac operator \(\D\), given by \eqref{DiracOp}, to the Laplace operator $\Delta$.
 Both operators act on the
Clifford-valued function and distribution spaces introduced in
Subsection~\ref{CliffordFunctionSpaces}.

 For \(\f\in\mathcal{S}(\BR^n;\BC\otimes\cl_{0,n})\), the Fourier transform of \(\D\f\) is given by 
\begin{eqnarray}
	\label{spectralDLaplace}
	\mathcal{F}(\D \f)(\XI) = i\XI \widehat{\f}(\XI),\quad \XI\in \BR^n.
\end{eqnarray}

Writing $i\XI$ in spherical coordinates as
\begin{eqnarray}
\label{DiracSymbol-Spherical}i\XI=|\XI|\,\frac{i\XI}{|\XI|}, \qquad \XI \in \BR^n\setminus\{\bm 0\},
\end{eqnarray}
we recover the pseudo-differential representation of the Riesz--Hilbert transform $\mathcal{H}$, given by \eqref{RieszHilbertOp}, from its symbol. 

Set
$$
\displaystyle {\bm \omega}:=\frac{i\XI}{\vert \XI\vert}.
$$

Then ${\bm \omega}^2=1$, and Plancherel's identity \eqref{Plancherel} shows that $\mathcal{H}$ is an $L^2-$isometry. Since
$
\mathcal{H}^2=I,
$
it is surjective and hence unitary on
\(L^2(\mathbb R^n;\mathbb C\otimes C\ell_{0,n})\).

By the Fourier convolution theorem and the ansatz
\begin{eqnarray*}
	\label{RieszHilbertLinearCombination} \mathcal{H}=\sum_{j=1}^{n}\e_j R_j, \quad R_j:=\mathcal{F}^{-1}\frac{i\xi_j}{\vert \XI\vert} \mathcal{F}, 
\end{eqnarray*}
the Riesz transforms $R_j$ admit the kernel representation (cf.\ \cite[p.~224]{SteinWeiss72})
\begin{equation}\label{RieszKernel}
	E_{j}(\X)=\frac{\Gamma\bigl(\tfrac{n+1}{2}\bigr)}{\pi^{\tfrac{n+1}{2}}}\;\frac{x_{j}}{|\X|^{\,n+1}},\quad \X\in\mathbb{R}^{n}\setminus\{0\},
\end{equation}
and we obtain the singular integral formula
\begin{eqnarray} \label{RieszHilbertSingularInt} 
	\mathcal{H} \f(\X)=\frac{\Gamma\left(\frac{n+1}{2}\right)}{\pi^{\frac{n+1}{2}}} \, P.V. \int_{\BR^n}\frac{\X-\Y}{\vert \X-\Y\vert^{n+1}}\f(\Y)\dy, \quad \X \in \BR^n. 
\end{eqnarray}

Since $(i\XI)^2=|\XI|^2$, duality arguments yield, in $\mathcal{S}'(\mathbb{R}^n;\BC\otimes \cl_{0,n})$, the spectral formula
\begin{align}
	\label{SpaceFractionalDirac}
	\D=(-\Delta)^{\frac{1}{2}}\mathcal{H}.
\end{align}

\begin{remark}
	The operators $\D$, $(-\Delta)^{\frac{s}{2}}$, and $\mathcal{H}$ are defined on $\mathcal{S}(\mathbb{R}^n;\mathbb{C}\otimes\cl_{0,n})$ through their symbols and extend continuously to $\mathcal{S}'(\mathbb{R}^n;\mathbb{C}\otimes\cl_{0,n})$, as recalled in Subsection~\ref{CliffordFunctionSpaces}. We work first on $\mathcal{S}(\mathbb{R}^n;\mathbb{C}\otimes\cl_{0,n})$ and then extend the arguments by duality to $\mathcal{S}'(\mathbb{R}^n;\mathbb{C}\otimes\cl_{0,n})$. Throughout, the same convention is used for the $L^p$-spaces and the Sobolev spaces $W^{s,p}(\mathbb{R}^n;\mathbb{C}\otimes\cl_{0,n})$.
\end{remark}

Now, let us look to the so-called Plemelj projection operators given by
\begin{eqnarray}
	\label{ProjOp-RiezHilbert} \P_\pm =\frac{1}{2}\left(I\pm\mathcal{H}\right).
\end{eqnarray}

In terms of Fourier multipliers, one has
\begin{eqnarray}
	\label{ProjectionOp-pm}
		\P_+ \f = \mathcal{F}^{-1} \left(\chi_+(\cdot)\widehat{\f}\right)&\mbox{and}&
		\P_- \f = \mathcal{F}^{-1} \left(\chi_-(\cdot)\widehat{\f}\right),
	\end{eqnarray}
	where
	\begin{eqnarray}
		\label{ProjectionOp}
		\chi_\pm(\XI)=\frac{1}{2}\left(1\pm  \frac{i\XI}{\vert \XI\vert}\right), \quad \XI\in \BR^n\setminus \{\bm 0\}.
	\end{eqnarray}
	
	The identities
	\begin{eqnarray}
		\label{ProjectionOpProperties}
		\begin{array}{lll}
			\chi_+(\XI)+\chi_-(\XI)=1, &  \bigl(\chi_+(\XI)\bigr)^2=\chi_+(\XI), & \bigl(\chi_-(\XI)\bigr)^2=\chi_-(\XI),   \\[0.2cm]
			\chi_+(\XI)\chi_-(\XI)=0, & \chi_-(\XI)\chi_+(\XI)=0,
		\end{array}
	\end{eqnarray}
	(cf.~\cite[p. 671]{li1994clifford} and \cite[Subsection 5.2.1]{mcintosh1996clifford}) show that $\P_\pm$, defined by \eqref{ProjOp-RiezHilbert}, are complementary projection operators. In particular, they satisfy
	\begin{eqnarray}
		\label{ProjectionOp-Lp}	\P_\pm:L^p(\BR^n;\BC\otimes\cl_{0,n})\longrightarrow L^p(\BR^n;\BC\otimes\cl_{0,n}), \qquad 1<p<\infty,
	\end{eqnarray}
	and provide the basic decomposition associated with the Riesz–Hilbert transform that underlies the gauge correspondence between the solutions of \eqref{BO-Dirac} and \eqref{Schr-Alfa}.
	
	Since \(\P_\pm\) commute with scalar Fourier multipliers, the boundedness
	in \eqref{ProjectionOp-Lp} extends to \(\dot{W}^{s,p}\) and \(W^{s,p}\) for
	every \(s\in\BR\) and \(1<p<\infty\).
	
\section{Technical Results}\label{Technical-Section}


\subsection{Consequences of Assumption \ref{Dyadic-Assumption}}

The low and high frequency conditions \eqref{low-Frequencies} and
\eqref{hi-Frequencies} provide uniform dyadic control of the corresponding
Littlewood--Paley pieces. Summing over the dyadic scales yields the following
embedding-type result, which is stated in the next proposition.

\begin{proposition}\label{Tecnical-Prop1}
	Let $\PSI$ solve \eqref{BO-Dirac} and satisfy Assumption~\ref{Dyadic-Assumption}. Then, 
	\begin{eqnarray}
		\label{Sobolev-Condition}\PSI(\cdot,t)\in {H}^{\sigma+1}(\mathbb{R}^n;\mathbb{C}\otimes\cl_{0,n})\cap L^\infty(\mathbb{R}^n;\mathbb{C}\otimes\cl_{0,n}),& \mbox{for all}& t\in[0,T].
	\end{eqnarray}
	
	Also, for every $t\in [0,T]$ 
	\begin{eqnarray}
		\label{SquezeThm-Condition1} \PSI(\cdot,t)~~\text{is continuous on}~\BR^n& \mbox {such that}&
		\lim_{|\X|\rightarrow \infty} \PSI(\X,t)=0,
	\end{eqnarray}
	where $\displaystyle \sigma\geq \frac{n}{2}-1$ is the constant provided by Assumption~\ref{Dyadic-Assumption}.
	
\end{proposition}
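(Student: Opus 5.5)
The plan is to convert the dyadic bounds of Assumption~\ref{Dyadic-Assumption} into bounds on the Littlewood--Paley pieces of \(\PSI(\cdot,t)\), uniformly in \(t\), and then to sum over the scales. Fix \(t\in[0,T]\) outside the exceptional null set; for the remaining \(t\) we use a representative of the ess\,sup, or the continuity in \(t\) when it is available. By \eqref{supp-lo}, \eqref{supp-hi}, \(0\le\phi_{2^j}\le1\) and Plancherel \eqref{Plancherel}, we get
\[
\|P_0\PSI(\cdot,t)\|_{L^2}\lesssim M,
\qquad
\|P_j\PSI(\cdot,t)\|_{L^2}\lesssim M\,2^{-j(\sigma+1+\varepsilon)},\quad j\geq1 .
\]
The constants here are independent of \(j\) and \(t\).

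\textbf{The \(H^{\sigma+1}\) bound.} Insert these piece estimates into the norm equivalence \eqref{normEquivalence-LPaley} with \(s=\sigma+1\):
\[
\|\PSI(\cdot,t)\|_{H^{\sigma+1}}^2\lesssim M^2+\sum_{j\ge1}2^{2j(\sigma+1)}M^2 2^{-2j(\sigma+1+\varepsilon)}=M^2\Bigl(1+\sum_{j\ge1}2^{-2j\varepsilon}\Bigr)<\infty .
\]

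\textbf{The \(L^\infty\) bound.} Use \eqref{normInequality-LPaley}:
\[
\|\PSI(\cdot,t)\|_{L^\infty}\lesssim M+M\sum_{j\ge1}2^{j(\frac n2-\sigma-1-\varepsilon)} .
\]
This series converges precisely because \(\sigma\ge\frac n2-1\): the exponent is then at most \(-\varepsilon<0\). This step is where the threshold on \(\sigma\) and the \(\varepsilon\)-gain are really used.

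\textbf{Continuity and decay.} The same computation shows more than boundedness. Each \(P_j\PSI(\cdot,t)\) has compactly supported Fourier transform lying in \(L^2\), hence in \(L^1\), with \(\|\phi_{2^j}\widehat{\PSI}\|_{L^1}\lesssim 2^{jn/2}\|\phi_{2^j}\widehat{\PSI}\|_{L^2}\) by Cauchy--Schwarz. By the Riemann--Lebesgue lemma, applied componentwise in the basis \(\e_J\) since \(|\cdot|_0\) is equivalent to the coefficient norm, each \(P_j\PSI(\cdot,t)\) is continuous on \(\BR^n\) and vanishes as \(|\X|\to\infty\). The bound just computed shows that \(\sum_j\|P_j\PSI(\cdot,t)\|_{L^\infty}\) converges. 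Hence the partial sums \(\sum_{j\le N}P_j\PSI(\cdot,t)\) converge uniformly, and by \eqref{Partition-phi2j} their limit is \(\PSI(\cdot,t)\). Alternatively, one can note directly that \(\widehat{\PSI}(\cdot,t)\in L^1\) and apply Riemann--Lebesgue once.

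The space \(C_0(\BR^n;\BC\otimes\cl_{0,n})\) of continuous functions vanishing at infinity is closed under uniform limits. Therefore \(\PSI(\cdot,t)\) is continuous with \(\lim_{|\X|\to\infty}\PSI(\X,t)=0\), up to redefinition on a null set. This gives \eqref{SquezeThm-Condition1}.

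The main obstacle is this last identification: one must justify that the uniformly convergent Littlewood--Paley series equals \(\PSI(\cdot,t)\) pointwise and not merely in \(\mathcal S'\). This follows because \(L^2\)-convergence and uniform convergence produce the same limit almost everywhere. The other delicate point is the ess\,sup in \(t\), which gives the bounds only for almost every \(t\). Passing to every \(t\in[0,T]\) needs either a choice of representative or the time regularity of \(\PSI\) as a solution, and I would state this explicitly.
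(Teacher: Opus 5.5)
Your proposal is correct and follows the paper's proof essentially step by step. You derive dyadic $L^2$ bounds on $P_j\PSI(\cdot,t)$ from Assumption~\ref{Dyadic-Assumption} and Plancherel, sum them via \eqref{normEquivalence-LPaley} and \eqref{normInequality-LPaley} for the $H^{\sigma+1}\cap L^\infty$ bound, and use Cauchy--Schwarz on each dyadic piece to get $\widehat{\PSI}(\cdot,t)\in L^1$ followed by Riemann--Lebesgue (your uniform-convergence argument for the Littlewood--Paley partial sums just repackages that same $L^1$ bound). Your caveats about the a.e.\ representative, and about the $\operatorname*{ess\,sup}$ giving the bounds only for almost every $t$, are points the paper passes over silently, and making them explicit is an improvement.
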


\begin{proof}
	
	\medskip 
	
	\underline{Proof of \eqref{Sobolev-Condition}.}
	
	\medskip 
	
	By \eqref{normInequality-LPaley} and Bernstein's inequality, we have
	\begin{eqnarray}
		\label{Hs-Condition} \|\PSI(\cdot,t)\|_{{H}^{\sigma+1}}
		\lesssim
		\sum_{j=0}^\infty 2^{j(\sigma+1)}\|P_j\PSI(\cdot,t)\|_{L^2},
	\end{eqnarray}
	and
	\begin{eqnarray}
		\label{Linfty-Condition} \|\PSI(\cdot,t)\|_{L^\infty}
		\lesssim
		\sum_{j=0}^\infty 2^{\frac{jn}{2}}\|P_j\PSI(\cdot,t)\|_{L^{2}}.
	\end{eqnarray}
	
	By Assumption~\ref{Dyadic-Assumption}, Plancherel's identity \eqref{Plancherel}, and \eqref{LittlewoodPaleyPj}, we obtain
	\begin{eqnarray}
		\label{L2-phi2j-condition}
		(2\pi)^{n}\operatorname*{ess\,sup}_{t\in [0,T]}\|P_j\PSI(\cdot,t)\|_{L^2}^2
		&=& 
		\operatorname*{ess\,sup}_{t\in [0,T]}\int_{\mathbb{R}^n}\phi_{2^j}(\XI)\,|\widehat{\PSI}(\XI,t)|_0^2\,\dxi \nonumber \\
		&\lesssim&
		M^2\,2^{-2j(\sigma+1+\varepsilon)},\quad j\in \BN_0,
	\end{eqnarray}
	for some \(\varepsilon>0\) and \(\displaystyle M> 0\), where \((\phi_{2^j})_{j\in\mathbb{N}_0}\) is the dyadic family in \eqref{Dyadic-phi2j}. Hence
	\[
	\operatorname*{ess\,sup}_{t\in [0,T]}\|P_j\PSI(\cdot,t)\|_{L^2}
	\lesssim
	M\,2^{-j(\sigma+1+\varepsilon)},\qquad j\in \BN_0.
	\]
	
	Inserting this estimate into \eqref{Hs-Condition}, we obtain
	\begin{eqnarray*}
		\operatorname*{ess\,sup}_{t\in [0,T]}\|\PSI(\cdot,t)\|_{H^{\sigma+1}}
		&\lesssim&
		M\sum_{j=0}^{\infty}2^{j(\sigma+1)}2^{-j(\sigma+1+\varepsilon)} \\
		&=&
		M\sum_{j=0}^{\infty}2^{-j\varepsilon}<\infty.
	\end{eqnarray*}
	
	Since $\displaystyle \sigma+1\geq \frac{n}{2}$, \eqref{Linfty-Condition} gives
	\begin{eqnarray*}
		\operatorname*{ess\,sup}_{t\in [0,T]}\|\PSI(\cdot,t)\|_{L^\infty}
		&\lesssim&
		M\sum_{j=0}^{\infty}2^{\frac{jn}{2}}2^{-j(\sigma+1+\varepsilon)} \\
		&\lesssim&
		M\sum_{j=0}^{\infty}2^{-j\varepsilon}<\infty.
	\end{eqnarray*}
	
	Hence \eqref{Sobolev-Condition} follows.
	
	\medskip 
	
	\underline{Proof of \eqref{SquezeThm-Condition1}.}
	
	\medskip 
	
	By the support conditions \eqref{supp-lo} and \eqref{supp-hi}, the volume of $\operatorname{supp}(\phi_{2^j})$ is bounded above by a multiple of $2^{jn}$. Thus
	\[
	\|\phi_{2^j}(\cdot)\|_{L^2}^2
	=
	\int_{\BR^n} \big(\phi_{2^j}(\XI)\big)^2\dxi
	\lesssim
	2^{jn}.
	\]
	
	The Cauchy--Schwarz inequality gives
	\begin{eqnarray*}
		\int_{\mathbb{R}^n} \phi_{2^j}(\XI)|\widehat{\PSI}(\XI,t)|_0\,\dxi
		&\leq&
		\|\phi_{2^j}(\cdot)\|_{L^2}
		\|\phi_{2^j}(\cdot)\widehat{\PSI}(\cdot,t)\|_{L^2} \nonumber \\
		&\lesssim&
		2^{\frac{jn}{2}}\|\phi_{2^j}(\cdot)\widehat{\PSI}(\cdot,t)\|_{L^2}.
	\end{eqnarray*}
	
	By \eqref{L2-phi2j-condition}, we therefore obtain
	\begin{eqnarray}
		\label{L1-condition}
		\int_{\mathbb{R}^n} \phi_{2^j}(\XI)|\widehat{\PSI}(\XI,t)|_0\,\dxi
		&\lesssim&
		M2^{-j(\sigma+1+\varepsilon-\frac{n}{2})}.
	\end{eqnarray}
	
	Since $\displaystyle \sigma\geq \frac{n}{2}-1$ and $\varepsilon>0$, the series
	\[
	\sum_{j=0}^\infty 2^{-j(\sigma+1+\varepsilon-\frac{n}{2})}
	\]
	is convergent. Hence, by the partition of unity and the triangle inequality,
	\[
	\int_{\mathbb{R}^n}|\widehat{\PSI}(\XI,t)|_0\,\dxi
	\leq
	\sum_{j=0}^\infty
	\int_{\mathbb{R}^n}\phi_{2^j}(\XI)|\widehat{\PSI}(\XI,t)|_0\,\dxi
	<\infty.
	\]
	Thus,
	\[
	\widehat{\PSI}(\cdot,t)\in L^{1}(\BR^n;\BC\otimes\cl_{0,n}).
	\]
	
	The Fourier inversion formula and the Riemann--Lebesgue lemma yield \eqref{SquezeThm-Condition1} (see, for instance, \cite[Lemma 2.4.3]{hytonen2016analysis}).
	
\end{proof}

\begin{remark}\label{Besov-Remark}
Proposition~\ref{Tecnical-Prop1} shows that in the absence of Assumption~\ref{Dyadic-Assumption} a Besov-type condition is needed instead. Using the Littlewood--Paley projectors $P_j$ from Subsection~\ref{LittlewoodPaley-Subsection}, this means that
	\[
	\operatorname*{ess\,sup}_{t\in [0,T]}\sum_{j=0}^\infty 2^{(\sigma+1+\varepsilon)j}\,\|P_j\PSI(\cdot,t)\|_{L^2}<\infty.
	\]
	
That is, $\PSI(\cdot,t) \in B^{\sigma+1+\varepsilon}_{2,1} (\mathbb{R}^n;\BC \otimes \cl_{0,n})$ uniformly in $t\in[0,T]$.

The Besov regularity $\PSI(\cdot,t)\in B^{\sigma+1+\varepsilon}_{2,1}(\mathbb{R}^n;\BC \otimes \cl_{0,n})$ for $t \in [0,T]$ is stronger than the corresponding Sobolev regularity, and yields, for $\displaystyle \sigma\geq \frac{n}{2}-1$ and $\varepsilon>0$, the embedding
	\begin{eqnarray*}
		B^{\sigma+1+\varepsilon}_{2,1}(\mathbb{R}^n;\BC\otimes \cl_{0,n})&\hookrightarrow& H^{\sigma+1}(\mathbb{R}^n;\BC\otimes \cl_{0,n}) \cap L^\infty(\mathbb{R}^n;\BC\otimes \cl_{0,n}).
	\end{eqnarray*}
	
\end{remark}

The remainder of the paper does not make use of Besov spaces. We avoided them to simplify the presentation. For further details, see e.g. Grafakos~\cite[Subsection~6.5]{GrafakosVol250}.
	
\subsection{The gauge map}

Now let
\begin{equation}\label{Gauge-u}
	\u(\X,t)=\exp(-i\w(\X,t)),
	\qquad
	(\X,t)\in\BR^n\times[0,T].
\end{equation}

The commutation condition in
Assumption~\ref{CommutingPhase-Assumption} allows us to
differentiate this expression in the usual way. We record the
resulting identities for later use.

\begin{lemma}
	\label{Technical-Lemma0}
	Suppose that \(\w\) satisfies
	Assumption~\ref{CommutingPhase-Assumption}. Then, $\u$ given by \eqref{Gauge-u} satisfies
	\begin{equation}\label{dt-u}
		\partial_t\u(\X,t)
		=
		-i(\partial_t\w(\X,t))\u(\X,t),
		\qquad
		(\X,t)\in\BR^n\times[0,T],
	\end{equation}
	and
	\begin{equation}\label{Dirac-u}
		\D\u(\X,t)
		=
		-i(\D\w(\X,t))\u(\X,t),
		\qquad
		(\X,t)\in\BR^n\times[0,T].
	\end{equation}
\end{lemma}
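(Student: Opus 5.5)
The plan is to work with the power series $\exp(-i\w)=\sum_{k\geq 0}\frac{(-i)^k}{k!}\w^k$ in the Banach algebra $(\BC\otimes\cl_{0,n},|\cdot|_0)$. Submultiplicativity of $|\cdot|_0$ (property (i) of Subsection~\ref{CliffordFunctionSpaces}) makes the series converge absolutely and locally uniformly on $\BR^n\times[0,T]$, because $\w$ is continuous by Assumption~\ref{CommutingPhase-Assumption}.

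\textbf{Differentiating the powers.} Let $\partial$ denote either $\partial_t$ or $\partial_{x_j}$. The Leibniz rule for products in the associative algebra gives
\[
\partial(\w^k)=\sum_{m=0}^{k-1}\w^{m}(\partial\w)\w^{k-1-m}.
\]
By \eqref{CommutingPhase-Condition}, $\partial\w$ commutes with $\w$, hence with every power of $\w$. Each term therefore equals $(\partial\w)\w^{k-1}$, and so $\partial(\w^k)=k(\partial\w)\w^{k-1}$.

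\textbf{Summing the series.} The differentiated series $\sum_k \frac{(-i)^k}{k!}k(\partial\w)\w^{k-1}$ is dominated by $|\partial\w|_0\,e^{|\w|_0}$. This bound is locally uniform since $\w\in C^1$. Termwise differentiation is therefore justified, and it gives
\[
\partial\u=(\partial\w)\sum_{k\geq1}\frac{(-i)^k}{(k-1)!}\w^{k-1}=-i(\partial\w)\u .
\]
The case $\partial=\partial_t$ is \eqref{dt-u}.

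\textbf{The Dirac operator.} For \eqref{Dirac-u}, multiply $\partial_{x_j}\u=-i(\partial_{x_j}\w)\u$ on the left by $\e_j$ and sum over $j$. This uses only that $i$ is a central scalar and that left multiplication by $\e_j$ is linear:
\[
\D\u=\sum_j\e_j\partial_{x_j}\u=-i\Bigl(\sum_j\e_j\partial_{x_j}\w\Bigr)\u=-i(\D\w)\u .
\]
No commutation of $\e_j$ with $\w$ is needed here, because $\e_j$ stays to the left throughout.

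The only delicate point is the collapse of the Leibniz sum. That collapse needs $\partial\w$ to commute with $\w$ itself, which is exactly Assumption~\ref{CommutingPhase-Assumption}; commutation with $\u$ then follows. The remaining analytic justification of termwise differentiation is routine given $C^1$ regularity and submultiplicativity.
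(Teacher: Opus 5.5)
Your proposal is correct and follows the paper's proof essentially step for step. You expand $\exp(-i\w)$ as a power series, collapse the Leibniz sum $\sum_{\ell}\w^\ell(\partial\w)\w^{k-1-\ell}$ to $k(\partial\w)\w^{k-1}$ using \eqref{CommutingPhase-Condition}, resum to $-i(\partial\w)\u$, and obtain \eqref{Dirac-u} by left multiplication with $\e_j$ and summation. Your domination bound $|\partial\w|_0\,e^{|\w|_0}$, via submultiplicativity of $|\cdot|_0$ and the $C^1$ regularity of $\w$, usefully supplies the justification of termwise differentiation that the paper leaves implicit.
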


\begin{proof}
	Let \(\partial_\mu\) denote either \(\partial_t\) or
	\(\partial_{x_j}\), \(j=1,\ldots,n\). By the power series expansion
	of \eqref{Gauge-u}, we have
	\[
	\u(\X,t)
	=
	\sum_{k=0}^{\infty}
	\frac{(-i)^k}{k!}\bigl(\w(\X,t)\bigr)^k.
	\]
	
Differentiation term by term gives
	\[
	\partial_\mu\u(\X,t)
	=
	\sum_{k=1}^{\infty}
	\frac{(-i)^k}{k!}\partial_\mu\bigl(\w(\X,t)\bigr)^k.
	\]
	
	For $k\in \BN$, the product rule yields
	\[
	\partial_\mu\bigl(\w(\X,t)\bigr)^k
	=
	\sum_{\ell=0}^{k-1}
	\bigl(\w(\X,t)\bigr)^\ell
	(\partial_\mu\w(\X,t))
	\bigl(\w(\X,t)\bigr)^{k-1-\ell},
	\]
	and, according to Assumption~\ref{CommutingPhase-Assumption}, the right-hand side of the previous formula simplifies to
	\[
	\partial_\mu\bigl(\w(\X,t)\bigr)^k
	=
	k(\partial_\mu\w(\X,t))\bigl(\w(\X,t)\bigr)^{k-1}.
	\]
	
	Consequently,
	\begin{align*}
		\partial_\mu\u(\X,t)
		&=
		-i(\partial_\mu\w(\X,t))
		\sum_{k=1}^{\infty}
		\frac{(-i)^{k-1}}{(k-1)!}
		\bigl(\w(\X,t)\bigr)^{k-1} \\
		&=
		-i(\partial_\mu\w(\X,t))\u(\X,t).
	\end{align*}
	
Taking \(\partial_\mu=\partial_t\), we obtain \eqref{dt-u}. In the same order of ideas, it readily follows
	\[
	\partial_{x_j}\u(\X,t)
	=
	-i(\partial_{x_j}\w(\X,t))\u(\X,t),
	\qquad j=1,\ldots,n.
	\]
	
Moreover, using \eqref{DiracOp} we obtain
\begin{align*}
	\D\u(\X,t)
	&=-i\sum_{j=1}^n
	\e_j(\partial_{x_j}\w(\X,t))\u(\X,t) \\
	&=-i(\D\w(\X,t))\u(\X,t).
\end{align*}

This proves \eqref{Dirac-u}.
\end{proof}

The next lemma gives the Hermitian identity for \(\D\PSI\) implied by
Assumption~\ref{AntiCommutation-Assumption}. Proposition~\ref{Tecnical-Prop2} then transfers this identity to the canonical phase \(\w\) of \(\u=\exp(-i\w)\).

\begin{lemma}\label{Tecnical-Lemma1}
	Let $\PSI$ solve \eqref{BO-Dirac} and satisfy Assumption~\ref{AntiCommutation-Assumption}.
	
	Then
	\begin{equation}
		\label{Hermitian-Condition}
		\big(\D \PSI(\X,t)\big)^\dagger=\D \PSI(\X,t), \qquad (\X,t)\in \BR^n \times [0,T],
	\end{equation}
\end{lemma}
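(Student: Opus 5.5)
The plan is a direct pointwise computation with the conjugation rules \eqref{conjugation}. By \eqref{DiracOp}, $\D\PSI=\sum_{j=1}^n \e_j\,\partial_{x_j}\PSI$. The conjugation is conjugate-linear, and $\partial_{x_j}$ only differentiates the complex coefficients $\psi_J$ in \eqref{CliffordPsi}. Hence conjugation commutes with $\partial_{x_j}$:
\[
\bigl(\partial_{x_j}\PSI\bigr)^\dagger=\partial_{x_j}\bigl(\PSI^\dagger\bigr).
\]
Combining this with the anti-multiplicativity $(\a\b)^\dagger=\b^\dagger\a^\dagger$ and $\e_j^\dagger=-\e_j$ gives
\[
\bigl(\D\PSI\bigr)^\dagger=\sum_{j=1}^n \bigl(\partial_{x_j}\PSI\bigr)^\dagger\e_j^\dagger
=-\sum_{j=1}^n \partial_{x_j}\bigl(\PSI^\dagger\bigr)\e_j .
\]

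Next I will differentiate the anti-commutation identity \eqref{AntiCommutation-Condition} in $x_j$. This is legitimate because $\e_j$ is constant and \eqref{AntiCommutation-Condition} holds for every $(\X,t)$. It yields $\partial_{x_j}(\PSI^\dagger)\,\e_j=-\e_j\,\partial_{x_j}\PSI$ for each fixed $j$. Substituting into the sum above,
\[
\bigl(\D\PSI\bigr)^\dagger=\sum_{j=1}^n \e_j\,\partial_{x_j}\PSI=\D\PSI,
\]
which is \eqref{Hermitian-Condition}. Only the diagonal identity $(\mathcal{A}_j)$ is paired with $\e_j$, so no cross terms appear.

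The main obstacle is justifying the differentiation, since the argument needs $\partial_{x_j}\PSI$ to make sense. My first approach is to read the identity distributionally. Since $\dagger$ is a fixed conjugate-linear map on the finite-dimensional coefficient space, \eqref{AntiCommutation-Condition} is a linear identity among the components $\psi_J$ and their complex conjugates. It therefore passes to distributional derivatives in $\mathcal{S}'(\BR^n;\BC\otimes\cl_{0,n})$, and \eqref{Hermitian-Condition} holds there. Pointwise validity then follows from the regularity in Proposition~\ref{Tecnical-Prop1}: $\PSI(\cdot,t)\in H^{\sigma+1}$, and $\widehat{\PSI}$ is integrable with extra dyadic decay. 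Alternatively, one can argue on the Fourier side. There conjugation sends $\widehat{\f}(\XI)$ to $\widehat{\f}(-\XI)^\dagger$, and the symbol $i\XI$ satisfies $(i\XI)^\dagger=i\XI$, so the same sign bookkeeping goes through at the level of symbols.
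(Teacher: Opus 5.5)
Your proposal is correct and matches the paper's proof step for step: commute $\partial_{x_j}$ with $\dagger$, use $(\e_j\partial_{x_j}\PSI)^\dagger=-\partial_{x_j}(\PSI^\dagger)\,\e_j$, and substitute the $x_j$-derivative of \eqref{AntiCommutation-Condition}, which is what the paper means by applying Assumption~\ref{AntiCommutation-Assumption} ``at the level of derivatives.'' Your regularity discussion goes beyond the paper, which does not address it; note, however, that Proposition~\ref{Tecnical-Prop1} relies on Assumption~\ref{Dyadic-Assumption}, which is not a hypothesis of this lemma, so the distributional reading is the self-contained justification here.
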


\begin{proof}
	Since differentiation commutes with $\dagger$, we have
	\[
	\big(\partial_{x_j}\PSI(\X,t)\big)^\dagger
	=
	\partial_{x_j}\big(\PSI(\X,t)\big)^\dagger,
	\qquad j=1,\dots,n.
	\]
	
	Hence, using Assumption~\ref{AntiCommutation-Assumption} we obtain
	\[
	\big(\D\PSI(\X,t)\big)^\dagger
	=
	\left(\sum_{j=1}^n \e_j\,\partial_{x_j}\PSI(\X,t)\right)^\dagger
	=
	-	\sum_{j=1}^n \partial_{x_j}\big(\PSI(\X,t)\big)^\dagger\,\e_j.
	\]
	
	Applying again Assumption~\ref{AntiCommutation-Assumption} at the level of derivatives (which preserves the anti-commutation), we may rewrite
	\[
	\partial_{x_j}\big(\PSI(\X,t)\big)^\dagger\,\e_j
	=
	-\e_j\,\partial_{x_j}\PSI(\X,t),
	\qquad j=1,\dots,n,
	\]
	and therefore
	\[
	\big(\D\PSI(\X,t)\big)^\dagger
	=
	\sum_{j=1}^n \e_j\,\partial_{x_j}\PSI(\X,t)
	=
	\D\PSI(\X,t),
	\]
	which is exactly \eqref{Hermitian-Condition}.
\end{proof}

\begin{proposition}\label{Tecnical-Prop2}
Let $\PSI$ solve \eqref{BO-Dirac} and satisfy Assumptions~\ref{AntiCommutation-Assumption}--\ref{Dyadic-Assumption} and set 	
	\begin{eqnarray}
		\label{w} \w(\X,t):= \D(-\Delta)^{-1}\PSI(\X,t)&,&(\X,t)\in \BR^n \times [0,T].
	\end{eqnarray} 
	
	Then 
\begin{equation}\label{Dw}
	\D \w(\X,t)=\PSI(\X,t),\qquad (\X,t)\in \BR^n \times [0,T],
\end{equation}
	
	Moreover, $\u$, given by \eqref{Gauge-u},
	 satisfies \eqref{unit-modulus} and
	\begin{equation} 
		\label{DuPsi} \D\u(\X,t) = -i\PSI(\X,t)\u(\X,t), \qquad (\X,t)\in\BR^n\times[0,T].
	\end{equation} 
\end{proposition}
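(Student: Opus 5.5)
The proof splits into three parts: the identity \eqref{Dw}, the Hermitian property of $\w$ (which yields \eqref{unit-modulus}), and the formula \eqref{DuPsi}.

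\textbf{Step 1: the identity \eqref{Dw}.} We work at the Fourier level. By \eqref{spectralDLaplace} and \eqref{RieszOperators},
\[
\widehat{\w}(\XI,t)=\frac{i\XI}{|\XI|^2}\,\widehat{\PSI}(\XI,t).
\]
Using $(i\XI)^2=|\XI|^2$, this gives $\widehat{\D\w}=i\XI\cdot i\XI|\XI|^{-2}\widehat{\PSI}=\widehat{\PSI}$, that is, $\D\w=\PSI$ formally, or equivalently $\D^2(-\Delta)^{-1}=I$ by \eqref{Factorization-D}.

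To make this rigorous, we must check that $\widehat{\w}(\cdot,t)$ is a genuine tempered distribution. The symbol $i\XI|\XI|^{-2}$ has size $|\XI|^{-1}$, which is locally integrable near the origin because $n\geq 3$ (indeed any $n\geq2$ suffices). By Cauchy--Schwarz and \eqref{low-Frequencies},
\[
\int_{|\XI|\le1}|\XI|^{-1}|\widehat{\PSI}|_0\,\dxi\lesssim M.
\]
At high frequencies, the bounds from Proposition~\ref{Tecnical-Prop1} give control. Hence $\widehat{\w}\in L^1_{loc}$ with polynomial growth, the multiplier identity holds in $\mathcal{S}'(\BR^n;\BC\otimes\cl_{0,n})$, and \eqref{Dw} holds pointwise by the continuity \eqref{SquezeThm-Condition1} together with Assumption~\ref{CommutingPhase-Assumption}, which gives $\w\in C^1$.

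\textbf{Step 2: $\w$ is Hermitian.} Since $(-\Delta)^{-1}$ is a real scalar multiplier, it commutes with $\dagger$ and with left and right multiplication by $\e_j$. Therefore
\[
\w^\dagger=\bigl((-\Delta)^{-1}\D\PSI\bigr)^\dagger=(-\Delta)^{-1}(\D\PSI)^\dagger=(-\Delta)^{-1}\D\PSI=\w,
\]
where the third equality uses Lemma~\ref{Tecnical-Lemma1}.

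This step needs care. Lemma~\ref{Tecnical-Lemma1} is a pointwise identity, and we must pass it through the nonlocal operator $(-\Delta)^{-1}$. This is legitimate because $\dagger$ acts componentwise on the coefficients $\psi_J$: it conjugates them and applies the sign of the reversion/involution on each $\e_J$. Moreover $(-\Delta)^{-1}$ has a real, even symbol, so it maps real-valued functions to real-valued functions and hence commutes with complex conjugation of the coefficients. We therefore verify $\bigl((-\Delta)^{-1}\f\bigr)^\dagger=(-\Delta)^{-1}\f^\dagger$ on the Fourier side, using $\overline{\widehat{g}(\XI)}=\widehat{\bar g}(-\XI)$ and the evenness of $|\XI|^{-2}$.

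\textbf{Step 3: the unit-modulus property \eqref{unit-modulus}.} From the power series of $\u$ and the fact that $\dagger$ is an anti-automorphism that conjugates scalars,
\[
\u^\dagger=\sum_k\frac{\overline{(-i)^k}}{k!}(\w^\dagger)^k=\exp(i\w).
\]
Since $\w$ commutes with itself, $\exp(-i\w)\exp(i\w)=\exp(0)=1$, which is \eqref{unit-modulus}. Absolute convergence of the series in the Banach algebra norm $|\cdot|_0$, by the product norm inequality, justifies the termwise manipulations.

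\textbf{Step 4: the formula \eqref{DuPsi}.} This follows immediately by combining \eqref{Dirac-u} from Lemma~\ref{Technical-Lemma0} with \eqref{Dw}.

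The main obstacle is Step 1: the low-frequency singularity of $(-\Delta)^{-1}$, which is needed to make $\w$ well defined. Step 2's commutation of $\dagger$ with $(-\Delta)^{-1}$ is the only other nontrivial point.
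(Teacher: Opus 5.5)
Your proof is correct and follows the paper's argument step for step. The symbol identity $(i\XI)(i\XI|\XI|^{-2})=1$ gives \eqref{Dw}; rewriting $\w=(-\Delta)^{-1}\D\PSI$ and combining Lemma~\ref{Tecnical-Lemma1} with the fact that the real multiplier $(-\Delta)^{-1}$ commutes with $\dagger$ gives $\w^\dagger=\w$, hence \eqref{unit-modulus}; and Lemma~\ref{Technical-Lemma0} together with \eqref{Dw} gives \eqref{DuPsi}. Your extra justifications only fill in details the paper leaves implicit.

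One small correction to the optional low-frequency part of Step~1. The Cauchy--Schwarz bound on $\int_{|\XI|\le1}|\XI|^{-1}|\widehat{\PSI}|_0\,\dxi$ needs $|\XI|^{-2}$ to be integrable near the origin, i.e.\ $n\geq3$, not $n\geq2$, and the proposition does not assume $n\geq3$. This is harmless, because Assumption~\ref{CommutingPhase-Assumption} already assumes $\w\in L^\infty_T H^\sigma_{\X}$. So $\widehat{\w}(\cdot,t)$ is a locally square-integrable tempered function and no separate low-frequency argument is needed.
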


\begin{proof}
For $\XI\neq {\bm 0}$, the Fourier symbol of \(\D(-\Delta)^{-1}\) is $
i\XI|\XI|^{-2}$. 

Hence,
\[
(i\XI)\bigl(i\XI|\XI|^{-2}\bigr)=1,
\quad \mbox{for all} \quad \XI\in \BR^n \setminus \{{\bm 0}\},
\]
and therefore
\(\displaystyle 
\D\w(\cdot,t)=\PSI(\cdot,t)
\)
holds in \(\mathcal{S}'(\BR^n;\BC\otimes \cl_{0,n})\).

It was noted in \eqref{Factorization-D} that the operators \(\D\) and \((-\Delta)^{-1}\) commute on \(\mathcal S'(\mathbb R^n;\mathbb C\otimes C\ell_{0,n})\) so that 
\begin{eqnarray}
\label{w-commute}\w(\X,t)=(-\Delta)^{-1}\D\PSI(\X,t) , \qquad (\X,t)\in\BR^n\times[0,T].
\end{eqnarray}

Moreover,
\((-\Delta)^{-1}\) is a real-valued operator and therefore commutes with
the Clifford conjugation $\dagger$, given by \eqref{conjugation}.
 Hence, by Lemma \ref{Tecnical-Lemma1}
\[
\bigl((-\Delta)^{-1}\D\PSI(\X,t)\bigr)^\dagger
=
(-\Delta)^{-1}\bigl(\D\PSI(\X,t)\bigr)^\dagger
=
(-\Delta)^{-1}\D\PSI(\X,t).
\]

Thus, by \eqref{w-commute} we conclude that 
\begin{eqnarray*}
\bigl(\w(\X,t)\bigr)^\dagger=\w(\X,t), &\mbox{for all} &(\X,t)\in \BR^n\times [0,T].
\end{eqnarray*}

Consequently, by the properties of the exponential function
\[
\u(\X,t) \bigl(\u(\X,t)\bigr)^\dagger
=
\exp\left(-i\w(\X,t)+i\bigl(\w(\X,t)\bigr)^\dagger\right)
=
1,
\]
which proves \eqref{unit-modulus}.

Finally, Assumption \eqref{CommutingPhase-Assumption} and Lemma \ref{Technical-Lemma0} give \eqref{DuPsi}.
This proves the proposition.
\end{proof}

We next record two estimates connecting $\u$ and $\PSI$.
Since $\u$ satisfies the unit-modulus property \eqref{unit-modulus} and the identity \eqref{DuPsi}, the fractional Leibniz rule in
Lemma~\ref{GrafakosOh-Lemma} gives the required estimate.  One term still
contains the top-order seminorm of $\u$, but a smallness condition
on $M$ in Assumption~\ref{Dyadic-Assumption} allows it to be moved to
the left-hand side.

For the converse estimate, we rewrite $\PSI$ through
$\D\u$ using \eqref{DuPsi} and \eqref{unit-modulus}.  The fractional
Leibniz rule from Lemma~\ref{GrafakosOh-Lemma}, together with a 
Sobolev embedding argument, then shows that
the $\dot{W}^{s,p}$ seminorm of $\PSI(\cdot,t)$ is controlled by the
$\dot{W}^{s+1,p}$ seminorm of $\u(\cdot,t)$ and the critical quantity
$\|\PSI(\cdot,t)\|_{\dot{H}^{\frac{n}{2}-1}}$.

\begin{proposition}\label{Tecnical-Prop3}
	For every \(t\in[0,T]\), one has
	\begin{equation}
		\label{Linfty-u}
		\|\u(\cdot,t)\|_{L^\infty}=2^{\frac n2}.
	\end{equation}
Moreover, let \(n\geq 3\) and \(s\geq 0\). Suppose that the constants
\(\varepsilon\) and \(M\) in Assumption~\ref{Dyadic-Assumption} satisfy
\begin{equation}
	\label{M-Condition}
	M\leq\frac{1-2^{-(1+\varepsilon)}}{2C_{n,s}},
\end{equation}
where \(C_{n,s}>0\) denotes the constant in the Sobolev and fractional
Leibniz estimates. Then, for every \(t\in[0,T]\),
\begin{equation}
	\label{Hs-u}
	\|\u(\cdot,t)\|_{\dot{H}^{s+1}}
	\lesssim
	2^{\frac{n}{2}+1}\|\PSI(\cdot,t)\|_{\dot{H}^s}.
\end{equation}
\end{proposition}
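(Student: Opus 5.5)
For \eqref{Linfty-u}, I will use the unit-modulus property \eqref{unit-modulus} from Proposition~\ref{Tecnical-Prop2}. Since $\u\u^\dagger=1$ and $\a^\dagger\a=\a\a^\dagger$, we get $\bigl[\u^\dagger\u\bigr]_0=1$ pointwise. By \eqref{InnerProduct-0} this gives $|\u(\X,t)|_0=2^{\frac n2}$ for every $(\X,t)$, and taking the essential supremum yields \eqref{Linfty-u}.

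For \eqref{Hs-u}, the plan is to use the Dirac factorization. Since $\D=(-\Delta)^{\frac12}\mathcal H$ and $\mathcal H$ is unitary on $L^2$ and commutes with $(-\Delta)^{\frac s2}$, we have
\[
\|\u(\cdot,t)\|_{\dot H^{s+1}}=\|(-\Delta)^{\frac s2}\D\u(\cdot,t)\|_{L^2}=\|\D\u(\cdot,t)\|_{\dot H^s}.
\]
By \eqref{DuPsi}, this equals $\|\PSI\u\|_{\dot H^s}$. I then apply Lemma~\ref{GrafakosOh-Lemma} with $p=2$. For the first term I take $q_1=2$, $q_2=\infty$, which together with \eqref{Linfty-u} gives $2^{\frac n2}\|\PSI\|_{\dot H^s}$. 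For the second term I take $r_1=n$ and $r_2=\frac{2n}{n-2}$, which is admissible since $n\geq3$. This gives $\|\PSI\|_{L^n}\|\u\|_{\dot W^{s,\frac{2n}{n-2}}}$. The homogeneous Sobolev embeddings $\dot H^{\frac n2-1}\hookrightarrow L^n$ and $\dot H^{s+1}\hookrightarrow\dot W^{s,\frac{2n}{n-2}}$ then give
\[
\|\u\|_{\dot H^{s+1}}\le C_{n,s}\,2^{\frac n2}\|\PSI\|_{\dot H^s}+C_{n,s}\|\PSI\|_{\dot H^{\frac n2-1}}\|\u\|_{\dot H^{s+1}}.
\]

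The main step is absorbing the last term into the left-hand side, and this is where \eqref{M-Condition} enters. I will bound $\|\PSI(\cdot,t)\|_{\dot H^{\frac n2-1}}$ in terms of $M$ using \eqref{L2-phi2j-condition}, splitting into low and high frequencies:
\[
\|\PSI\|_{\dot H^{\frac n2-1}}\lesssim\sum_{j\ge0}2^{j(\frac n2-1)}\|P_j\PSI\|_{L^2}\le M\sum_{j\ge0}2^{-j(\sigma+1+\varepsilon-\frac n2+1)}\le M\sum_{j\ge0}2^{-j(1+\varepsilon)}=\frac{M}{1-2^{-(1+\varepsilon)}},
\]
where the last inequality uses $\sigma\ge\frac n2-1$. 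The low-frequency term needs $|\XI|^{\frac n2-1}\le1$ on $|\XI|\le1$, which holds since $n\ge3$.

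Under \eqref{M-Condition}, after normalizing constants, this gives $C_{n,s}\|\PSI\|_{\dot H^{\frac n2-1}}\le\frac12$. Absorbing then yields $\|\u\|_{\dot H^{s+1}}\le2\cdot C_{n,s}2^{\frac n2}\|\PSI\|_{\dot H^s}$, which is \eqref{Hs-u}.

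A technical caveat is that absorption requires $\|\u(\cdot,t)\|_{\dot H^{s+1}}<\infty$ a priori. This is not automatic, since $\u$ is not decaying: $|\u|_0\equiv2^{\frac n2}$. I would justify finiteness by noting that $\D\u=-i\PSI\u$ with $\PSI\in H^{\sigma+1}\cap L^\infty$. If that route is problematic, I would instead run the argument for frequency-truncated or regularized versions and pass to the limit. Applying Lemma~\ref{GrafakosOh-Lemma}, which is stated for Schwartz functions, to non-Schwartz $\u$ likewise needs this density or limiting argument.
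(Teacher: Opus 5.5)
Your proposal is correct and follows the paper's proof essentially step for step: the unit-modulus property for \eqref{Linfty-u}; then $\|\u(\cdot,t)\|_{\dot H^{s+1}}=\|\PSI\u\|_{\dot H^s}$; Lemma~\ref{GrafakosOh-Lemma} with the same exponents $p=q_1=2$, $q_2=\infty$, $r_1=n$, $r_2=\frac{2n}{n-2}$; the two homogeneous Sobolev embeddings; the Littlewood--Paley bound $\|\PSI(\cdot,t)\|_{\dot H^{\frac n2-1}}\lesssim M/(1-2^{-(1+\varepsilon)})$; and absorption under \eqref{M-Condition}. Your caveats about the a~priori finiteness of $\|\u(\cdot,t)\|_{\dot H^{s+1}}$ and about applying the Leibniz estimate to the non-Schwartz function $\u$ are legitimate points that the paper passes over, and one small correction applies: $[\u^\dagger\u]_0=[\u\u^\dagger]_0$ should be justified by the cyclicity $[\a\b]_0=[\b\a]_0$, because $\a^\dagger\a=\a\a^\dagger$ fails in general in $\BC\otimes\cl_{0,n}$ for $n\geq2$ (for $\a=\e_1+i\e_2$ one gets $\a^\dagger\a=2-2i\e_1\e_2$ but $\a\a^\dagger=2+2i\e_1\e_2$).
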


\begin{proof}
	Since $\displaystyle |1|_0=|\e_\emptyset|_0=2^{\frac{n}{2}}$, the unit-modulus property
	\eqref{unit-modulus}, established in
	Proposition~\ref{Tecnical-Prop2}, gives
	\begin{equation}
		\label{unit-modulus-u}
		|\u(\X,t)|_0=2^{\frac{n}{2}},
		\qquad
		(\X,t)\in\BR^n\times[0,T].
	\end{equation}
	Thus, \eqref{Linfty-u} follows from \eqref{Lp-Norm}.
	
	For the proof of \eqref{Hs-u}, we recall the formula \eqref{DuPsi} that results from Proposition~\ref{Tecnical-Prop2}.	
	Using \eqref{Sobolev-Homogeneous-Hs} and \eqref{DiracSymbol-Spherical}, we obtain
	\[
	\|\u(\cdot,t)\|_{\dot H^{s+1}}
	=
	\|\D\u(\cdot,t)\|_{\dot H^s}
	=
	\|\PSI(\cdot,t)\u(\cdot,t)\|_{\dot H^s}.
	\]
	
	Applying Lemma~\ref{GrafakosOh-Lemma} with
	\[
	p=q_1=2,
	\qquad
	q_2=\infty,
	\qquad
	r_1=n,
	\qquad
	r_2=\frac{2n}{n-2},
	\]
	we obtain
	\begin{align}
		\|\u(\cdot,t)\|_{\dot H^{s+1}}
		\lesssim
		\|\PSI(\cdot,t)\|_{\dot H^s}
		\|\u(\cdot,t)\|_{L^\infty}
		+
		\|\PSI(\cdot,t)\|_{L^n}
		\|\u(\cdot,t)\|_{\dot W^{s,\frac{2n}{n-2}}}.
		\label{LeibnizEstimate-uHs}
	\end{align}
	
	For \(n\geq3\), the Hardy--Littlewood--Sobolev inequality
	(see~\cite[Theorem 6.1.3]{GrafakosVol250}) gives rise to
	\begin{eqnarray*}
		\|\f\|_{L^{n}}
		\lesssim
		\|\f\|_{\dot{H}^{\frac{n}{2}-1}}
		&\mbox{and}&
		\|\g\|_{L^{\frac{2n}{n-2}}}
		\lesssim
		\|\g\|_{\dot{H}^1}.
	\end{eqnarray*}
	
	Applying these estimates to \(\f=\PSI(\cdot,t)\) and
	\(\g=(-\Delta)^{\frac{s}{2}}\u(\cdot,t)\), we obtain from
	\eqref{SobolevHomogeneous-Norm-Wsp}
	\begin{eqnarray*}
		\| \PSI(\cdot,t)\|_{L^n}
		\lesssim
		\| \PSI(\cdot,t)\|_{\dot{H}^{\frac{n}{2}-1}}
		&\mbox{and}&
		\|\u(\cdot,t)\|_{\dot{W}^{s,\frac{2n}{n-2}}}
		\lesssim
		\|\u(\cdot,t)\|_{\dot{H}^{s+1}}.
	\end{eqnarray*}
	
	Therefore, by \eqref{Linfty-u},
	\begin{equation}
		\label{LeibnizEstimate-uHs-2}
		\|\u(\cdot,t)\|_{\dot H^{s+1}}
		\lesssim
		2^{\frac{n}{2}}\|\PSI(\cdot,t)\|_{\dot H^s}
		+
		\| \PSI(\cdot,t)\|_{\dot{H}^{\frac{n}{2}-1}}
		\|\u(\cdot,t)\|_{\dot H^{s+1}}.
	\end{equation}
	
	The Littlewood--Paley decomposition and Bernstein's inequality
	\eqref{BernsteinInequality} give
	\[
	\| \PSI(\cdot,t)\|_{\dot{H}^{\frac{n}{2}-1}}
	\lesssim
	\sum_{j=0}^\infty
	2^{j\left(\frac{n}{2}-1\right)}
	\|P_j\PSI(\cdot,t)\|_{L^2},
	\]
	where \(P_j\) denotes the Littlewood--Paley projection operator
	\eqref{LittlewoodPaleyPj}.
	
	By Assumption~\ref{Dyadic-Assumption}, we obtain
	\[
	\|P_j\PSI(\cdot,t)\|_{L^2}
	\lesssim
	M\,2^{-j\left(\sigma+1+\varepsilon\right)}.
	\]
	
	Hence
	\[
	\| \PSI(\cdot,t)\|_{\dot{H}^{\frac{n}{2}-1}}
	\lesssim
	M\sum_{j=0}^\infty
	2^{-j\left(\sigma+2+\varepsilon-\frac{n}{2}\right)}.
	\]
	
	Since \(\displaystyle \sigma\geq \frac{n}{2}-1\), it follows that
	\[
	\| \PSI(\cdot,t)\|_{\dot{H}^{\frac{n}{2}-1}}
	\lesssim
	M\sum_{j=0}^\infty 2^{-j(1+\varepsilon)}
	=
	\frac{M}{1-2^{-(1+\varepsilon)}},
	\]
	uniformly for \(t\in[0,T]\).

	By the preceding bound and \eqref{M-Condition}, we have
	\[
	C_{n,s}\|\PSI(\cdot,t)\|_{\dot{H}^{\frac{n}{2}-1}}
	\leq
	\frac{C_{n,s}M}{1-2^{-(1+\varepsilon)}}
	\leq \frac{1}{2}.
	\]
	
	Hence, the last term on the right-hand side of
	\eqref{LeibnizEstimate-uHs-2} may be absorbed into the left-hand side. Therefore,
	\[
	\|\u(\cdot,t)\|_{\dot{H}^{s+1}}
	\lesssim
	\frac{2^{\frac{n}{2}}}{1-C_{n,s}\| \PSI(\cdot,t)\|_{\dot{H}^{\frac{n}{2}-1}}}
	\|\PSI(\cdot,t)\|_{\dot{H}^s}
	\leq
	2^{\frac{n}{2}+1}\|\PSI(\cdot,t)\|_{\dot{H}^s},
	\]
	which proves \eqref{Hs-u}.
\end{proof}

\begin{proposition}
	\label{Tecnical-Lemma2}
Let \(n\geq3\), \(1<p<n\), and \(s\geq0\). Let $\PSI$ solve \eqref{BO-Dirac} and satisfy
	Assumptions~\ref{AntiCommutation-Assumption}--\ref{Dyadic-Assumption}, and let \(\u\) be defined in
	Proposition~\ref{Tecnical-Prop2}. Then, for every \(t\in[0,T]\),
	\begin{equation}
		\label{Psi-from-u-Sobolev}
		\|\PSI(\cdot,t)\|_{\dot{W}^{s,p}}
		\lesssim
		2^{\frac{n}{2}}\|\u(\cdot,t)\|_{\dot{W}^{s+1,p}}
		\left(
		1
		+
		\|\PSI(\cdot,t)\|_{\dot{H}^{\frac{n}{2}-1}}
		\right).
	\end{equation}
\end{proposition}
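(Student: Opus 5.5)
The plan is to invert the identity \eqref{DuPsi} using the unit-modulus property. By Proposition~\ref{Tecnical-Prop2}, $\D\u=-i\PSI\u$. Right-multiplying by $\u^\dagger$ and using \eqref{unit-modulus} gives
\[
\PSI(\X,t)=i\,\bigl(\D\u(\X,t)\bigr)\bigl(\u(\X,t)\bigr)^\dagger ,
\]
where $\u^\dagger\u=\u\u^\dagger=1$ follows from the general fact $\a^\dagger\a=\a\a^\dagger$ recorded in Subsection~\ref{CliffordAlgebras}. The estimate then reduces to a product estimate for $(\D\u)\u^\dagger$ in $\dot W^{s,p}$.

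I will apply Lemma~\ref{GrafakosOh-Lemma} with $\f=\D\u(\cdot,t)$ and $\g=\u(\cdot,t)^\dagger$, taking
\[
q_1=p,\quad q_2=\infty,\quad r_1=n,\quad r_2=p^*:=\frac{np}{n-p},
\]
which satisfy $\frac1p=\frac1n+\frac1{p^*}$ precisely because $1<p<n$. This yields
\[
\|\PSI(\cdot,t)\|_{\dot W^{s,p}}\lesssim \|\D\u(\cdot,t)\|_{\dot W^{s,p}}\|\u^\dagger(\cdot,t)\|_{L^\infty}+\|\D\u(\cdot,t)\|_{L^n}\|\u^\dagger(\cdot,t)\|_{\dot W^{s,p^*}} .
\]
For the first term, $|\u^\dagger|_0=|\u|_0=2^{\frac n2}$ by \eqref{Linfty-u}. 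Moreover, since $\D=(-\Delta)^{\frac12}\mathcal H$ by \eqref{SpaceFractionalDirac}, and $\mathcal H$ is a sum of Riesz transforms bounded on $L^p$ for $1<p<\infty$ and commuting with $(-\Delta)^{\frac s2}$, we get $\|\D\u\|_{\dot W^{s,p}}\lesssim\|\u\|_{\dot W^{s+1,p}}$. This gives the term $2^{\frac n2}\|\u\|_{\dot W^{s+1,p}}$.

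For the second term, write $\D\u=-i\PSI\u$. Then
\[
\|\D\u\|_{L^n}\le 2^{\frac n2}\|\PSI\|_{L^n}\lesssim 2^{\frac n2}\|\PSI\|_{\dot H^{\frac n2-1}},
\]
using the pointwise submultiplicativity of $|\cdot|_0$ and the Hardy--Littlewood--Sobolev embedding already used in the proof of Proposition~\ref{Tecnical-Prop3}, valid for $n\ge3$. Next, $\dagger$ acts componentwise by complex conjugation up to signs, so it commutes with the real even multiplier $(-\Delta)^{\frac s2}$ and preserves $|\cdot|_0$. Hence $\|\u^\dagger\|_{\dot W^{s,p^*}}=\|\u\|_{\dot W^{s,p^*}}$. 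The Sobolev embedding $\dot W^{1,p}\hookrightarrow L^{p^*}$, applied to $(-\Delta)^{\frac s2}\u$, again valid because $p<n$, gives $\|\u\|_{\dot W^{s,p^*}}\lesssim\|\u\|_{\dot W^{s+1,p}}$. Combining the two terms yields \eqref{Psi-from-u-Sobolev}.

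The main obstacle I expect is justification rather than algebra. Lemma~\ref{GrafakosOh-Lemma} is stated for Schwartz functions, whereas $\u$ has constant modulus $2^{\frac n2}$ and is not decaying; $\u-1$ may not even lie in $L^p$. I would handle this by applying the lemma to $\u^\dagger-1$ instead of $\u^\dagger$. This changes neither the $\dot W^{s,p^*}$ seminorm for $s>0$ nor the structure of the argument, since $(\D\u)\cdot 1$ is just the first term. I would then pass to the limit by density and Fatou, using that the right-hand side is finite by Assumption~\ref{CommutingPhase-Assumption} and Proposition~\ref{Tecnical-Prop1}.

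A secondary point is the case $s=0$, where a homogeneous $\dot W^{0,p^*}$ seminorm of $\u^\dagger-1$ would be problematic. There I would avoid the Leibniz route and estimate directly by H\"older: $|\PSI|_0\le|\D\u|_0\,|\u^\dagger|_0$ pointwise. This gives the bound with only the first term, which is consistent with the stated inequality.
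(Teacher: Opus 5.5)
Your proposal is correct and follows the paper's proof essentially step for step. Both use the identity $-i\PSI=(\D\u)\u^\dagger$, Lemma~\ref{GrafakosOh-Lemma} with $(q_1,q_2,r_1,r_2)=(p,\infty,n,\frac{np}{n-p})$, the bound $\|\D\u\|_{L^n}\le 2^{\frac n2}\|\PSI\|_{L^n}$ together with Hardy--Littlewood--Sobolev, and $\D=(-\Delta)^{\frac12}\mathcal H$ with the $L^p$-boundedness of $\mathcal H$. Your extra care is a justification the paper omits, not a different route: you apply the lemma to the non-decaying factor through $\u^\dagger-1$, and you treat $s=0$ separately by H\"older, since there the embedding $\|\u\|_{L^{p^*}}\lesssim\|\u\|_{\dot W^{1,p}}$ fails when $|\u|_0\equiv 2^{\frac n2}$.
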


\begin{proof}
	Fix \(t\in[0,T]\). By \eqref{DuPsi} and the unit-modulus property \eqref{unit-modulus}, we have
	\begin{eqnarray*}
		-i\PSI(\X,t)=\D\u(\X,t)\bigl(\u(\X,t)\bigr)^\dagger,
		& \mbox{for all} &
		(\X,t)\in \BR^n\times [0,T].
	\end{eqnarray*}
	
We observe that the Clifford conjugation, defined by \eqref{conjugation}, preserves the Clifford norm $|\cdot|_0$ (see \eqref{InnerProduct-0}) and commutes
	with scalar-valued Fourier multipliers, so that it preserves the homogeneous
	Sobolev seminorms \eqref{SobolevHomogeneous-Norm-Wsp}.
	
Then, by applying Lemma~\ref{GrafakosOh-Lemma}, with
	\[
	q_1=p,\qquad
	q_2=\infty,\qquad
	r_1=n,\qquad
	r_2=\frac{np}{n-p},
	\]
	it follows that
	\begin{align*}
		\|\PSI(\cdot,t)\|_{\dot{W}^{s,p}}
		\lesssim{}&
		\|\D \u(\cdot,t)\|_{\dot{W}^{s,p}}
		\|\u(\cdot,t)\|_{L^\infty}+
		\|\D \u(\cdot,t)\|_{L^{n}}
		\|\u(\cdot,t)\|_{
			\dot W^{s,\frac{np}{n-p}}}.
	\end{align*}
	
	By Proposition~\ref{Tecnical-Prop3}, the unit-modulus property
	\eqref{unit-modulus} gives
	\[
	\|\u(\cdot,t)\|_{L^\infty}=2^{\frac{n}{2}}.
	\]
	
	Moreover, \eqref{DuPsi}, the product norm inequality, and H\"older's
	inequality yield
	\[
	\|\D\u(\cdot,t)\|_{L^{n}}
	\leq
	\| \PSI(\cdot,t)\|_{L^{n}}
	\| \u(\cdot,t)\|_{L^\infty}
	=
	2^{\frac{n}{2}}
	\|\PSI(\cdot,t)\|_{L^{n}}.
	\]
	
	Consequently,
	\begin{align}
		\|\PSI(\cdot,t)\|_{\dot{W}^{s,p}}
		\lesssim
		2^{\frac{n}{2}}
		\left(
		\|\D\u(\cdot,t)\|_{\dot{W}^{s,p}}
		+
		\|\PSI(\cdot,t)\|_{L^{n}}
		\|\u(\cdot,t)\|_{\dot{W}^{s,\frac{np}{n-p}}}
		\right).
		\label{Psi-from-u}
	\end{align}
	
	By observing that
	\begin{eqnarray*}
\frac{1}{2}-\frac{n-2}{2n}=\frac{1}{n}	&\mbox{and}&
	\frac{1}{p}-\frac{n-p}{np}=\frac{1}{n},
	\end{eqnarray*}
	the Hardy--Littlewood--Sobolev inequality
	(cf.~\cite[Theorem 6.1.3]{GrafakosVol250}) yields
	\begin{eqnarray*}
		\|\PSI(\cdot,t)\|_{L^n}
		\lesssim
		\|\PSI(\cdot,t)\|_{\dot{H}^{\frac{n}{2}-1}}
		&\mbox{and}&
		\|\u(\cdot,t)\|_{
			\dot{W}^{s,\frac{np}{n-p}}}
		\lesssim
		\|\u(\cdot,t)\|_{\dot{W}^{s+1,p}}.
	\end{eqnarray*}
	
	Finally, the spectral formula
	\[
	\D=(-\Delta)^{\frac{1}{2}}\mathcal{H}
	\]
	for the Dirac operator \eqref{DiracOp} (see~\eqref{spectralDLaplace},~\eqref{DiracSymbol-Spherical} and \eqref{SpaceFractionalDirac}), together with the
boundedness of
	\(\mathcal{H}=\P_+-\P_-\) on 	\(L^p(\BR^n;\BC\otimes\cl_{0,n})\) (see~\eqref{ProjOp-RiezHilbert} and ~\eqref{ProjectionOp-Lp}), gives
	\[
	\|\D\u(\cdot,t)\|_{\dot W^{s,p}}
	\lesssim
	\|\u(\cdot,t)\|_{\dot W^{s+1,p}}.
	\]
	
	Substitution of the preceding estimates into \eqref{Psi-from-u}
	proves \eqref{Psi-from-u-Sobolev}.
\end{proof}

\subsection{Estimates involving the gauge nonlinearity \eqref{Nelson_pm}}\label{Estimates-Nonlinear-Subsection}

We now establish the following estimates for the gauge nonlinearity
\eqref{Nelson_pm}, which will be used below in the inhomogeneous
Strichartz estimates for \eqref{Schr-Alfa}.

\begin{proposition}
	\label{LeibnizSobolev-Theorem}
For \(n\geq3\), let $\PSI$ solve \eqref{BO-Dirac} and satisfy Assumptions~
\ref{AntiCommutation-Assumption}--\ref{Dyadic-Assumption}, and let \(\u\) be defined in
Proposition~\ref{Tecnical-Prop2}. 
	
	Suppose that
	\(
	\sigma\geq \frac{n}{2}-1
	\)
	is the regularity index in Assumption~\ref{Dyadic-Assumption}, and that
	\begin{equation}
		\label{M-Condition-sigma}
		M\leq
		\frac{1-2^{-(1+\varepsilon)}}{2C_{n,\sigma+1}},
	\end{equation}
	where \(C_{n,\sigma+1}>0\) denotes the constant associated with the
	Sobolev and fractional Leibniz estimates. Then, for every
	\(t\in[0,T]\),
	\begin{eqnarray}
		\label{GrafakosOh-Nu-embedding}
		\left\|
		\P_\pm{\bm N}\left(\PSI(\cdot,t),\u(\cdot,t)\right)
		\right\|_{\dot{H}^{\sigma+1}}
		\lesssim \nonumber \\ 
\lesssim		2^{\frac{n}{2}+1}
		\|\PSI(\cdot,t)\|_{\dot{H}^{\sigma+1}}
		\|\PSI(\cdot,t)\|_{L^\infty}
		\left(
		1+2\|\PSI(\cdot,t)\|_{\dot{H}^{\frac{n}{2}-1}}
		\right).
	\end{eqnarray}
\end{proposition}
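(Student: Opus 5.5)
We plan to strip off the outer Plemelj projector, apply the fractional Leibniz rule to the product $\P_-[{\bm G}(\PSI)]\,\u$, and then control the $\u$-factor through Proposition~\ref{Tecnical-Prop3} with $s=\sigma$. First, by \eqref{ProjectionOp-Lp} and the remark that $\P_\pm$ commute with scalar Fourier multipliers, $\P_\pm$ are bounded on $\dot H^{\sigma+1}$. Hence
\[
\left\|\P_\pm{\bm N}(\PSI(\cdot,t),\u(\cdot,t))\right\|_{\dot H^{\sigma+1}}\lesssim 2\left\|\P_-[{\bm G}(\PSI(\cdot,t))]\,\u(\cdot,t)\right\|_{\dot H^{\sigma+1}}.
\]
Writing ${\bm g}:=\P_-[{\bm G}(\PSI)]$, we apply Lemma~\ref{GrafakosOh-Lemma} with $p=2$. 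In the first term we take $q_1=2$, $q_2=\infty$. In the second term we take $r_1=n$, $r_2=\frac{2n}{n-2}$, which is the same splitting as in Proposition~\ref{Tecnical-Prop3}. This gives
\[
\|{\bm g}\,\u\|_{\dot H^{\sigma+1}}\lesssim \|{\bm g}\|_{\dot H^{\sigma+1}}\|\u\|_{L^\infty}+\|{\bm g}\|_{L^n}\|\u\|_{\dot W^{\sigma+1,\frac{2n}{n-2}}}.
\]

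\textbf{First term.} By \eqref{Linfty-u}, $\|\u\|_{L^\infty}=2^{\frac n2}$. The boundedness of $\P_-$ on $\dot H^{\sigma+1}$ and \eqref{Gu} give $\|{\bm g}\|_{\dot H^{\sigma+1}}\lesssim\|\PSI^\dagger\PSI\|_{\dot H^{\sigma+1}}$. A second application of Lemma~\ref{GrafakosOh-Lemma}, with exponents $(2,\infty)$ and $(\infty,2)$, then yields $\|\PSI^\dagger\PSI\|_{\dot H^{\sigma+1}}\lesssim\|\PSI\|_{\dot H^{\sigma+1}}\|\PSI\|_{L^\infty}$. This uses that $\dagger$ preserves $|\cdot|_0$ and commutes with scalar multipliers, as noted in the proof of Proposition~\ref{Tecnical-Prop2}. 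All quantities are finite by Proposition~\ref{Tecnical-Prop1}.

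\textbf{Second term.} We use Hardy--Littlewood--Sobolev in the form $\|\u\|_{\dot W^{\sigma+1,\frac{2n}{n-2}}}\lesssim\|\u\|_{\dot H^{\sigma+2}}$. Then \eqref{Hs-u} with $s=\sigma+1$ gives $\|\u\|_{\dot H^{\sigma+2}}\lesssim2^{\frac n2+1}\|\PSI\|_{\dot H^{\sigma+1}}$. That instance of Proposition~\ref{Tecnical-Prop3} requires exactly the smallness \eqref{M-Condition-sigma}, which is why $C_{n,\sigma+1}$ appears there. For ${\bm g}$ we use the $L^n$-boundedness of $\P_-$ and Hölder:
\[
\|{\bm g}\|_{L^n}\lesssim\|\PSI^\dagger\PSI\|_{L^n}\le\|\PSI\|_{L^\infty}\|\PSI\|_{L^n}\lesssim\|\PSI\|_{L^\infty}\|\PSI\|_{\dot H^{\frac n2-1}},
\]
where the last step is Hardy--Littlewood--Sobolev as in Proposition~\ref{Tecnical-Prop3}. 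Combining, the second term is $\lesssim 2^{\frac n2+1}\|\PSI\|_{\dot H^{\sigma+1}}\|\PSI\|_{L^\infty}\|\PSI\|_{\dot H^{\frac n2-1}}$.

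Adding the two terms produces $2^{\frac n2+1}\|\PSI\|_{\dot H^{\sigma+1}}\|\PSI\|_{L^\infty}(1+2\|\PSI\|_{\dot H^{\frac n2-1}})$, up to absorbing the harmless factor $2^{\frac n2}\le2^{\frac n2+1}$ into the implicit constant.

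\textbf{Main obstacle.} The delicate step is the second term, which asks for one more derivative on $\u$ than $\PSI$ carries. One must either invoke \eqref{Hs-u} at level $s=\sigma+1$, which needs $\PSI\in\dot H^{\sigma+1}$ (true by Proposition~\ref{Tecnical-Prop1}), or avoid the extra derivative. If the former bookkeeping fails, the fallback is to move the derivatives elsewhere. One can choose the pair $r_1=\infty$, $r_2=2$, bounding $\|{\bm g}\|_{L^\infty}$ by $\|\PSI\|_{L^\infty}^2$. The difficulty is that $\P_-$ is not bounded on $L^\infty$, so one would instead go through Bernstein and the dyadic control of Assumption~\ref{Dyadic-Assumption}. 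In that route $\|\u\|_{\dot H^{\sigma+1}}\lesssim2^{\frac n2+1}\|\PSI\|_{\dot H^\sigma}$ follows from \eqref{Hs-u} with $s=\sigma$, and the factor $\|\PSI\|_{\dot H^{\frac n2-1}}$ would arise from the absorption step.
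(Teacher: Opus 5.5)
Your proposal is correct and follows the paper's proof essentially step by step. You remove $\P_\pm$ by their boundedness on $\dot H^{\sigma+1}$, apply Lemma~\ref{GrafakosOh-Lemma} with $(q_1,q_2)=(2,\infty)$ and $(r_1,r_2)=(n,\frac{2n}{n-2})$, and use $\|\u\|_{L^\infty}=2^{\frac n2}$ together with Hardy--Littlewood--Sobolev and \eqref{Hs-u} at level $s=\sigma+1$, which is exactly where \eqref{M-Condition-sigma} enters. The remaining differences from the paper are minor:
\begin{itemize}
\item You bound $\|{\bm G}(\PSI)\|_{L^n}$ by H\"older and then Sobolev on $\PSI$, rather than first embedding ${\bm G}(\PSI)$ into $\dot H^{\frac n2-1}$ as the paper does.
\item You state the exponent pairs $(2,\infty)$ and $(\infty,2)$ for $\PSI^\dagger\PSI$ explicitly, which is clearer than the paper's ``same choice of exponents''.
\item Your opening mention of $s=\sigma$ is a slip, since the argument actually uses $s=\sigma+1$.
\item The fallback in your last paragraph is not needed.
\end{itemize}
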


\begin{proof}
	Fix \(t\in[0,T]\). By the definition of \({\bm N}\) in
	\eqref{Nelson_pm},
	\[
	\P_\pm{\bm N}\left(\PSI(\X,t),\u(\X,t)\right)
	=
	\P_\pm
	\left(
	\P_-\left[{\bm G}(\PSI(\X,t))\right]\u(\X,t)
	\right).
	\]
	
	Since \(\P_\pm\) are bounded on homogeneous Sobolev spaces by
	\eqref{ProjectionOp-Lp} and the
	observation following it, it follows that
	\begin{equation}
		\label{DPpm-Gu}
		\left\|
		\P_\pm{\bm N}\left(\PSI(\cdot,t),\u(\cdot,t)\right)
		\right\|_{\dot{H}^{\sigma+1}}
		\lesssim
		\left\|
		\P_-\left[{\bm G}(\PSI(\cdot,t))\right]\u(\cdot,t)
		\right\|_{\dot{H}^{\sigma+1}}.
	\end{equation}
	
	Next, we apply Lemma~\ref{GrafakosOh-Lemma} with
	\[
	p=q_1=2,
	\qquad
	q_2=\infty,
	\qquad
	r_1=n,
	\qquad
	r_2=\frac{2n}{n-2}.
	\]
	Using also the boundedness of \(\P_-\) in \(\dot{H}^{\sigma+1}\), we obtain
	\begin{eqnarray}
		\label{GrafakosOh-Nu}
		\left\|
		\P_\pm{\bm N}\left(\PSI(\cdot,t),\u(\cdot,t)\right)
		\right\|_{\dot{H}^{\sigma+1}}
		&\lesssim&
		\|{\bm G}(\PSI(\cdot,t))\|_{\dot{H}^{\sigma+1}}
		\|\u(\cdot,t)\|_{L^\infty}
		\nonumber\\
		&+&
		\|{\bm G}(\PSI(\cdot,t))\|_{L^{n}}
		\|\u(\cdot,t)\|_{\dot{W}^{\sigma+1,\frac{2n}{n-2}}}.
	\end{eqnarray}
	
	The homogeneous Sobolev embeddings yield
	\begin{align}
		\label{Sobolev-2}
		\|{\bm G}(\PSI(\cdot,t))\|_{L^{n}}
		&\lesssim
		\|{\bm G}(\PSI(\cdot,t))\|_{\dot{H}^{\frac{n}{2}-1}},
		\\
		\label{Sobolev-3}
		\|\u(\cdot,t)\|_{\dot{W}^{\sigma+1,\frac{2n}{n-2}}}
		&\lesssim
		\|\u(\cdot,t)\|_{\dot{H}^{\sigma+2}}.
	\end{align}
	
	The smallness condition \eqref{M-Condition-sigma} allows us to apply
	Proposition~\ref{Tecnical-Prop3} to \(s=\sigma+1\). Hence,
	\begin{eqnarray*}
		\|\u(\cdot,t)\|_{L^\infty}=2^{\frac{n}{2}}
		&\mbox{and}&
		\|\u(\cdot,t)\|_{\dot{H}^{\sigma+2}}
		\lesssim
		2^{\frac{n}{2}+1}
		\|\PSI(\cdot,t)\|_{\dot{H}^{\sigma+1}}.
	\end{eqnarray*}
	
	It remains to estimate the terms involving the nonlinear term \({\bm G}(\PSI(\cdot,t))\).
	Applying Lemma~\ref{GrafakosOh-Lemma} to \eqref{Gu}, with the same choice of exponents, gives
	\begin{eqnarray*}
		\|{\bm G}(\PSI(\cdot,t))\|_{\dot{H}^{\sigma+1}}
		&\lesssim&
		2
		\|\PSI(\cdot,t)\|_{\dot{H}^{\sigma+1}}
		\|\PSI(\cdot,t)\|_{L^\infty},
	\end{eqnarray*}
	and, by the homogeneous Sobolev embedding,
	\begin{eqnarray*}
		\|{\bm G}(\PSI(\cdot,t))\|_{L^{n}}
		&\lesssim&
		2
		\|\PSI(\cdot,t)\|_{\dot{H}^{\frac{n}{2}-1}}
		\|\PSI(\cdot,t)\|_{L^\infty}.
	\end{eqnarray*}
	
	By substituting the estimates above, along with \eqref{Sobolev-2} and \eqref{Sobolev-3}, into \eqref{GrafakosOh-Nu}, we obtain \eqref{GrafakosOh-Nu-embedding}.
\end{proof}

\section{Gauge correspondence and a priori estimates}\label{Gauge-Section}

\subsection{Proof of Theorem~\ref{Theorem-Gauge}}

\begin{proof}
	From Proposition \ref{Tecnical-Prop2}, the function $\u$, given by \eqref{upm-Gauge}, satisfies \eqref{unit-modulus} and \eqref{DuPsi}.
	Furthermore, using the spectral formula \eqref{SpaceFractionalDirac} for $\D$ and 
	$$
	\w(\X,t)=\D(-\Delta)^{-1}\PSI(\X,t),
	$$
	given by \eqref{w} (see Proposition \ref{Tecnical-Prop2}), we rewrite \eqref{BO-Dirac} as
	\begin{eqnarray}
		\label{FractionalDirac-w}
		i\partial_t \w(\X,t)-\Delta\mathcal{H}\w(\X,t)={\bm G}\left(\D\w(\X,t)\right)&,~(\X,t)\in\BR^{n}\times [0,T],
	\end{eqnarray}
	where $\mathcal{H}$ denotes the Riesz--Hilbert transform (see~\eqref{RieszHilbertOp}).
	
	Using \eqref{AntiCommutation-Condition} provided by Assumption \ref{AntiCommutation-Assumption}, we have
	\begin{eqnarray*}
		\D(\PSI(\X,t)\u(\X,t))
		&=&\big(\D \PSI(\X,t)\big)\u(\X,t)-(\PSI(\X,t))^\dagger \D \u(\X,t).
	\end{eqnarray*}
	
	Combining this with \eqref{Dirac-u}, given by Lemma \ref{Technical-Lemma0}, and \eqref{Dw} we obtain
	\begin{eqnarray*}
		-\Delta \u(\X,t)
		&=&\D\left(\D \u(\X,t)\right) \\
		&=&-i \D\left[(\D \w(\X,t))\,\u(\X,t)\right] \\
		&=&\left[-i(-\Delta)\w(\X,t)+\left(\PSI(\X,t)\right)^\dagger\PSI(\X,t)\right]\u(\X,t).
	\end{eqnarray*}
	
	Using the preceding identity together with \eqref{dt-u} of Lemma \ref{Technical-Lemma0}, we obtain
	\begin{align*}
		i\partial_t \u(\X,t)-\Delta \mathcal{H}\u(\X,t)
		&=-i\left[i\partial_t\w(\X,t)-\Delta \mathcal{H} \w(\X,t)-\mathcal{H}{\bm G}(\PSI(\X,t))\right]\u(\X,t).
	\end{align*}
	
Therefore, 
	\begin{eqnarray}
		\label{SchroedingerGauge}
		i\partial_t \u(\X,t)-\Delta \mathcal{H}\u(\X,t)
		={\bm N}(\PSI(\X,t),\u(\X,t)),~(\X,t)\in \BR^{n}\times [0,T],
	\end{eqnarray}
	where ${\bm N}(\cdot,\cdot)$ is given by \eqref{Nelson_pm}.
	
	By the properties of the Plemelj projectors $\P_\pm$, the components $\u_+=\P_+\u$ and $\u_-=\P_-\u$ satisfy \eqref{Schr-Alfa} if and only if $\w$ satisfies \eqref{FractionalDirac-w}. This establishes the equivalence between solutions $\PSI$ of \eqref{BO-Dirac} and the pair $(\u_+,\u_-)$ solving the coupled Schr\"odinger system \eqref{Schr-Alfa}.
\end{proof}

\subsection{Proof of Theorem~\ref{Theorem-Hs}}

\begin{proof}
	
	\medskip
	
	\underline{Proof of \eqref{Hs-Condition-Theorem} and~\eqref{Mixed-supHs}.}
	
	\medskip
	
	By Proposition~\ref{Tecnical-Prop1}, $\PSI(\cdot,t)\in H^{\sigma+1}(\BR^n;\BC\otimes\cl_{0,n})\cap L^\infty(\BR^n;\BC\otimes\cl_{0,n})$ for every
	$t\in[0,T]$. In particular,
	\[
	\|\PSI\|_{L^\infty_T \dot{H}^{\sigma+1}_{\X}}\leq \|\PSI\|_{L^\infty_T {H}^{\sigma+1}_{\X}} <\infty
	\qquad \mbox{and} \qquad
	\|\PSI\|_{L^\infty_T L^\infty_{\X}}<\infty.
	\]
	
	\medskip
	
	\underline{Proof of \eqref{Hs-Conservation}.}
	
	\medskip
	
	Let $\displaystyle \sigma \geq \frac{n}{2}-1$ and put
	\[
	\PSI_\sigma(\X,t)
	=
	(-\Delta)^{\frac{\sigma}{2}}\PSI(\X,t).
	\]
	
	By the reformulation of \eqref{Gu} in terms of the Clifford norm $|\cdot|_0$, given by \eqref{Gu0}, we obtain
	\begin{eqnarray}
		\label{Gu-s}
		(-\Delta)^{\frac{\sigma}{2}}\D{\bm G}(\PSI(\X,t))
		=
		-i2^{-n}\D(-\Delta)^{\frac{\sigma}{2}}
		\bigl(|\PSI(\X,t)|_0^2\bigr).
	\end{eqnarray}
	
	Applying \((-\Delta)^{\frac{\sigma}{2}}\) to
	\eqref{BO-Dirac} and multiplying on the left by
	\(-i2^n(\PSI_\sigma(\X,t))^\dagger\), we obtain
	\begin{eqnarray}
		\label{BO1}
		2^n(\PSI_\sigma(\X,t))^\dagger
		\Bigl(
		\partial_t\PSI_\sigma(\X,t)
		-i\D(-\Delta)^{\frac{1}{2}}\PSI_\sigma(\X,t)
		\Bigr)
		\nonumber\\
		=-(\PSI_\sigma(\X,t))^\dagger
		\D(-\Delta)^{\frac{\sigma}{2}}
		\bigl(|\PSI(\X,t)|_0^2\bigr).
	\end{eqnarray}
	
	Likewise, applying \((-\Delta)^{\frac{\sigma}{2}}\) to the
	\(\dagger\)-conjugate of \eqref{BO-Dirac} and multiplying on the
	right by \(i2^n\PSI_\sigma(\X,t)\), we obtain
	\begin{eqnarray}
		\label{BO2}
		2^n\Bigl(
		\partial_t(\PSI_\sigma(\X,t))^\dagger
		+i\D(-\Delta)^{\frac{1}{2}}\PSI_\sigma(\X,t)
		\Bigr)
		\PSI_\sigma(\X,t)
		\nonumber\\
		=
		\D(-\Delta)^{\frac{\sigma}{2}}
		\bigl(|\PSI(\X,t)|_0^2\bigr)
		\PSI_\sigma(\X,t).
	\end{eqnarray}
	
	Summing \eqref{BO1} and \eqref{BO2}, integrating over \(\BR^n\), and
	using \eqref{InnerP}, we obtain
	\begin{eqnarray*}
		\partial_t\langle \PSI_\sigma(\cdot,t),\PSI_\sigma(\cdot,t)\rangle
		&=&
		\int_{\BR^n}
		\left(
		\D(-\Delta)^{\frac{\sigma}{2}}
		\bigl(|\PSI(\X,t)|_0^2\bigr)\PSI_\sigma(\X,t)
		\right.
		\nonumber\\
		&&\left.
		-(\PSI_\sigma(\X,t))^\dagger
		\D(-\Delta)^{\frac{\sigma}{2}}
		\bigl(|\PSI(\X,t)|_0^2\bigr)
		\right)\dx.
	\end{eqnarray*}
	
	Applying \([\,\cdot\,]_0:\BC\otimes\cl_{0,n}\to\BC\), and using
	\eqref{L2-Norm}, \eqref{SobolevHomogeneous-Norm-Wsp}, and
	\eqref{AntiCommutation-Condition}, we obtain
	\begin{eqnarray}
		\label{BO-integral-s}
		\partial_t\|\PSI(\cdot,t)\|_{\dot{H}^\sigma}^2
		=
		2\int_{\BR^n}
		\left[
		\D(-\Delta)^{\frac{\sigma}{2}}
		\bigl(|\PSI(\X,t)|_0^2\bigr)
		\PSI_\sigma(\X,t)
		\right]_0\dx.
	\end{eqnarray}
	
	By \eqref{Sobolev-Condition} in Proposition~\ref{Tecnical-Prop1},
	\[
	\PSI(\cdot,t)\in
	\dot{H}^\sigma(\BR^n;\BC\otimes\cl_{0,n})
	\cap
	\dot{H}^{\sigma+1}(\BR^n;\BC\otimes\cl_{0,n}).
	\] 
	
	Hence, by
	Lemma~\ref{GrafakosOh-Lemma},
	\begin{eqnarray*}
		\left\|
		\D(-\Delta)^{\frac{\sigma}{2}}
		\bigl(|\PSI(\cdot,t)|_0^2\bigr)
		\right\|_{L^2}
		&=&
		\left\|
		|\PSI(\cdot,t)|_0^2
		\right\|_{\dot{H}^{\sigma+1}}
		\nonumber\\
		&\lesssim&
		\|\PSI(\cdot,t)\|_{\dot{H}^{\sigma+1}}
		\|\PSI(\cdot,t)\|_{L^\infty}.
	\end{eqnarray*}
	
	Thus,
	\[
	\D(-\Delta)^{\frac{\sigma}{2}}
	\bigl(|\PSI(\cdot,t)|_0^2\bigr)
	\PSI_\sigma(\cdot,t)
	\in
	L^1(\BR^n;\BC\otimes\cl_{0,n}),
	\]
	and the right-hand side of \eqref{BO-integral-s} is well-defined.
	
	By the product norm inequality and the Cauchy--Schwarz inequality,
	\begin{eqnarray*}
		\partial_t\|\PSI(\cdot,t)\|_{\dot{H}^\sigma}^2
		&\lesssim&
		2\left\|
		\D(-\Delta)^{\frac{\sigma}{2}}
		\bigl(|\PSI(\cdot,t)|_0^2\bigr)
		\right\|_{L^2}
		\|\PSI_\sigma(\cdot,t)\|_{L^2}
		\nonumber\\
		&\lesssim&
		2\|\PSI(\cdot,t)\|_{\dot{H}^{\sigma+1}}
		\|\PSI(\cdot,t)\|_{L^\infty}
		\|\PSI(\cdot,t)\|_{\dot{H}^\sigma}
		\nonumber\\
		&\leq&
		2\|\PSI\|_{L^\infty_T \dot{H}^{\sigma+1}_{\X}}
		\|\PSI\|_{L^\infty_T L^\infty_{\X}}
		\|\PSI(\cdot,t)\|_{\dot{H}^\sigma}.
	\end{eqnarray*}
	
	Moreover, application of Young's inequality yields
	\begin{eqnarray}
	\label{BO-integral-ineq}
	\partial_t\|\PSI(\cdot,t)\|_{\dot{H}^\sigma}^2
	\lesssim
	\|\PSI\|_{L^\infty_T H^{\sigma+1}_{\X}}^2
	\|\PSI\|_{L^\infty_T L^\infty_{\X}}^2
	+
	\|\PSI(\cdot,t)\|_{\dot{H}^\sigma}^2.
\end{eqnarray}

We now use assumption \eqref{continuityPSI} to prove \eqref{Hs-Conservation}. The continuity in time of the nonlinear term follows from Lemma~\ref{GrafakosOh-Lemma}. More precisely, 
	\begin{eqnarray*}
		\label{continuityPSI2}
		t\mapsto
		\D(-\Delta)^{\frac{\sigma}{2}}
		\bigl(|\PSI(\cdot,t)|_0^2\bigr)
	\end{eqnarray*}
	belongs to
	\(
	C\bigl([0,T];L^2(\BR^n;\BC\otimes\cl_{0,n})\bigr).
	\)
	
	Indeed, for every $\X\in \BR^n$ and $t,t_0\in[0,T]$,
	\[
	|\PSI(\X,t)|_0^2-|\PSI(\X,t_0)|_0^2
	=
	2^n\left[
	(\PSI(\X,t))^\dagger\PSI(\X,t)
	-
	(\PSI(\X,t_0))^\dagger\PSI(\X,t_0)
	\right]_0,
	\]
	where
	\begin{eqnarray*}
		(\PSI(\X,t))^\dagger\PSI(\X,t)
		-
		(\PSI(\X,t_0))^\dagger\PSI(\X,t_0)
		&=&
		\left(
		\PSI(\X,t)-\PSI(\X,t_0)
		\right)^\dagger
		\PSI(\X,t)
		\nonumber\\
		&+&
		(\PSI(\X,t_0))^\dagger
		\left(
		\PSI(\X,t)-\PSI(\X,t_0)
		\right).
	\end{eqnarray*}
	
	Thus, Lemma~\ref{GrafakosOh-Lemma} and
	\eqref{normIneq-Wsp} imply
	\begin{align*}
		&\left\|~
		|\PSI(\cdot,t)|_0^2-
		|\PSI(\cdot,t_0)|_0^2~
		\right\|_{\dot{H}^{\sigma+1}}
		\\
		&\qquad\lesssim
		\|\PSI(\cdot,t)-\PSI(\cdot,t_0)\|_{H^{\sigma+1}}
		\left(
		\|\PSI(\cdot,t)\|_{L^\infty}
		+
		\|\PSI(\cdot,t_0)\|_{L^\infty}
		\right)
		\\
		&\qquad\quad+
		\|\PSI(\cdot,t)-\PSI(\cdot,t_0)\|_{L^\infty}
		\left(
		\|\PSI(\cdot,t)\|_{\dot{H}^{\sigma+1}}
		+
		\|\PSI(\cdot,t_0)\|_{\dot{H}^{\sigma+1}}
		\right).
	\end{align*}
	
	The right-hand side converges to zero as \(t\to t_0\). This proves the
	asserted continuity.
	
	Consequently, the right-hand side of \eqref{BO-integral-s} is continuous on
	\([0,T]\), and
	\[
\PSI \in C^1\bigl([0,T];\dot{H}^\sigma(\BR^n;\BC\otimes\cl_{0,n})\bigr).
	\]

 Let us denote by $A$ the constant
	\[
	A:=
	\|\PSI\|_{L^\infty_T \dot{H}^{\sigma+1}_{\X}}^2
	\|\PSI\|_{L^\infty_T L^\infty_{\X}}^2.
	\]

	Multiplication of both sides of \eqref{BO-integral-ineq} by \(e^{-t}\) gives
	\[
	\partial_t
	\left(
	e^{-t}\|\PSI(\cdot,t)\|_{\dot{H}^\sigma}^2
	\right)
\lesssim
e^{-t}A.
	\]
	
Integration over \([0,t]\), yields
		\[
	\|\PSI(\cdot,t)\|_{\dot{H}^\sigma}^2
\lesssim
	e^t\|\PSI(\cdot,0)\|_{\dot{H}^\sigma}^2
	+
	A(e^t-1).
	\]
	
	Consequently, for every \(t\in[0,T]\),
	\[
	\|\PSI(\cdot,t)\|_{\dot{H}^\sigma}^2
	\lesssim
	e^T\left(
	\|\PSI(\cdot,0)\|_{\dot{H}^\sigma}^2
	+
	T\|\PSI\|_{L^\infty_T \dot{H}^{\sigma+1}_{\X}}^2
	\|\PSI\|_{L^\infty_T L^\infty_{\X}}^2
	\right).
	\]
	
	Finally, the inequality
	\[
	a+b\leq\left(\sqrt a+\sqrt b\right)^2,
	\qquad
	a,b\geq0,
	\]
	gives
	\[
	\|\PSI(\cdot,t)\|_{\dot{H}^\sigma}
	\lesssim
	e^{\frac{T}{2}}
	\left(
	\|\PSI(\cdot,0)\|_{\dot{H}^\sigma}
	+
	\sqrt{T}
	\|\PSI\|_{L^\infty_T \dot{H}^{\sigma+1}_{\X}}
	\|\PSI\|_{L^\infty_T L^\infty_{\X}}
	\right),
	\]
	for all \(t\in[0,T]\). This proves \eqref{Hs-Conservation}.
\end{proof}

\begin{remark}[Lack of conservation laws]
	The integrand in \eqref{BO-integral-s} has no definite sign.
	Consequently, the anti-commutation condition
	\eqref{AntiCommutation-Condition} from Assumption
	\ref{AntiCommutation-Assumption} does not imply that the right-hand side
	of the energy identity vanishes, so that
	$\partial_t\|\PSI(\cdot,t)\|_{\dot{H}^\sigma}^2$ is not identically zero.
	The argument used in the proof of Theorem~\ref{Theorem-Hs} therefore
	yields an a priori estimate via the Cauchy--Schwarz inequality and
	Gronwall's lemma, rather than an exact conservation law for the
	$\dot{H}^\sigma-$seminorm. Whether the scalar contribution in \eqref{BO-integral-s} vanishes under
	a suitable invariant condition on the solution class remains open for
	$n\geq 3$.
\end{remark}

\section{From Strichartz estimates to a priori bounds}\label{Strichartz-Section}


By Theorem~\ref{Theorem-Gauge}, the Plemelj components \(\u_\pm\) satisfy the coupled Schr\"odinger system \eqref{Schr-Alfa}. We write this system in Duhamel form and apply the homogeneous and inhomogeneous Strichartz estimates of Ginibre and Velo \cite{GinibreVelo92}, together with the endpoint estimate of Keel and Tao \cite{KeelTao98} (see also \cite[Subsection~2.3]{Tao06}). The resulting bounds for \(\u_\pm\), combined with the estimate for the gauge nonlinearity \eqref{Nelson_pm} and the comparison estimate relating \(\u\) to \(\PSI\), give the claimed a priori bound for \(\PSI\).

\subsection{Homogeneous and inhomogeneous estimates}

By Duhamel's formula, the solutions $(\u_+,\u_-)$ of \eqref{Schr-Alfa} with initial data $(\u_+(\cdot,0),\u_-(\cdot,0))$ satisfy
\begin{eqnarray}
	\label{Duhamel-upm}\u_\pm(\X,t)=e^{\mp it\Delta}\u_\pm(\X,0) \pm i \int_0^t e^{\mp i(t-\tau)\Delta} \P_\pm{\bm N}\left(\PSI(\X,\tau),\u(\X,\tau)\right) d\tau,
\end{eqnarray}
where ${\bm N}(\cdot,\cdot)$ denotes the gauge nonlinearity defined in \eqref{Nelson_pm}.

The two terms in \eqref{Duhamel-upm} are controlled by the homogeneous and inhomogeneous mapping properties of
\begin{eqnarray*}
	\f \mapsto e^{\mp it\Delta}\f &\mbox{and}& {\bm F}(\cdot,\cdot) \mapsto \int_0^t e^{\mp i(t-\tau)\Delta} {\bm F}(\cdot,\tau) d\tau,
\end{eqnarray*}
respectively.

\begin{theorem}[see \cite{GinibreVelo92,KeelTao98}]\label{StrichartzTheorem}
Let $(q,p)$ and $(\widetilde{q},\widetilde{p})$ be sharp Schr\"odinger admissible pairs in the sense of Definition \ref{SchroedingerPair}. Then the homogeneous estimate
	\begin{eqnarray}
		\left\| e^{\mp i t\Delta}\f \right\|_{L^q_TL_{\X}^p}\lesssim \| \f\|_{L^2}
	\end{eqnarray}
	and the inhomogeneous estimate
	\begin{eqnarray}
		\label{inhomogeneousStrichartz}
		\left\| \int_0^t e^{\mp i(t-\tau)\Delta} {\bm F}(\cdot,\tau) d\tau\right\|_{L^q_TL_{\X}^p}\lesssim~\|{\bm F}\|_{L^{\widetilde{q}'}_TL^{\widetilde{p}'}_{\X}}
	\end{eqnarray}
	hold, where
	\begin{eqnarray*}
		\frac{1}{\widetilde{q}}+\frac{1}{\widetilde{q}'}=\frac{1}{\widetilde{p}}+\frac{1}{\widetilde{p}'}=1.
	\end{eqnarray*}
\end{theorem}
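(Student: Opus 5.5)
Since $e^{\mp it\Delta}$ is a scalar Fourier multiplier, the plan is to reduce to the scalar case componentwise and then run the standard $TT^*$ argument, with the Keel--Tao bilinear argument at the endpoint. Write $\f=\sum_J f_J\e_J$ with complex-valued $f_J$. Because $|\e_J|_0=2^{\frac n2}$ and the $\e_J$ are orthogonal for $(\cdot,\cdot)_0$, one has $|\f(\X)|_0^2=2^n\sum_J|f_J(\X)|^2$. Hence for every $1\le p\le\infty$ the norm $\|\f\|_{L^p}$ is equivalent, with constants depending only on $n$, to $\sum_J\|f_J\|_{L^p(\BR^n)}$, and the same holds for the mixed norms $L^q_TL^p_{\X}$. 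The propagator acts componentwise: $(e^{\mp it\Delta}\f)_J=e^{\mp it\Delta}f_J$, and likewise for the Duhamel integral applied to ${\bm F}$. It therefore suffices to prove both estimates for scalar functions on $\BR^n$, uniformly in $T$. In fact we prove them on $\BR$ and restrict to $[0,T]$.

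For the scalar homogeneous estimate we use two inputs. The first is the energy identity $\|e^{\mp it\Delta}f\|_{L^2}=\|f\|_{L^2}$, from Plancherel \eqref{Plancherel}. The second is the dispersive bound $\|e^{\mp it\Delta}f\|_{L^\infty}\lesssim|t|^{-\frac n2}\|f\|_{L^1}$, read off from the explicit Gaussian kernel $(4\pi i t)^{-\frac n2}e^{\pm i|\X|^2/(4t)}$. Riesz--Thorin interpolation gives $\|e^{\mp i(t-\tau)\Delta}g\|_{L^p}\lesssim|t-\tau|^{-n(\frac12-\frac1p)}\|g\|_{L^{p'}}$. Set $Tf=e^{\mp it\Delta}f$; then
\[
TT^*{\bm F}(t)=\int e^{\mp i(t-\tau)\Delta}{\bm F}(\tau)\,d\tau .
\]
For admissible pairs with $q>2$, the exponent satisfies $n(\frac12-\frac1p)=\frac2q<1$. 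The one-dimensional Hardy--Littlewood--Sobolev inequality in time then yields $\|TT^*{\bm F}\|_{L^q_tL^p_{\X}}\lesssim\|{\bm F}\|_{L^{q'}_tL^{p'}_{\X}}$, and the homogeneous estimate follows by duality. The case $(q,p)=(\infty,2)$ is just the energy identity.

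For the inhomogeneous estimate with two possibly different admissible pairs, first treat the untruncated operator $\int_{\BR}e^{\mp i(t-\tau)\Delta}{\bm F}(\tau)\,d\tau=T\,\widetilde T^{*}{\bm F}$. Composing the homogeneous estimate for $(q,p)$ with the dual estimate for $(\widetilde q,\widetilde p)$ gives the bound $L^{\widetilde q'}L^{\widetilde p'}\to L^2\to L^qL^p$. To pass to the retarded integral over $\tau\in[0,t]$, apply the Christ--Kiselev lemma, which is valid whenever $\widetilde q'<q$, that is, away from the double endpoint $q=\widetilde q=2$.

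The main obstacle is the endpoint $(q,p)=(2,\frac{2n}{n-2})$, which is admissible for $n\ge3$. There HLS in time fails, because the kernel is $|t-\tau|^{-1}$, and Christ--Kiselev also fails. At this step we invoke the Keel--Tao argument. Decompose the bilinear form $\iint\langle e^{\mp i(t-\tau)\Delta}{\bm F}(\tau),{\bm G}(t)\rangle\,d\tau\,dt$ dyadically in $|t-\tau|\sim2^k$. On each piece, interpolate between the $L^2$ bound and the dispersive bound to get estimates at exponents near the endpoint that gain or lose a power $2^{k\beta(a,b)}$. Bilinear real interpolation then sums the dyadic pieces into the endpoint estimate. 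The same bilinear argument handles the truncation $\tau<t$ directly, which yields \eqref{inhomogeneousStrichartz} for all admissible $(q,p)$ and $(\widetilde q,\widetilde p)$. The excluded triple $(2,\infty,2)$ never arises here because $n\ge3$.

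Finally, reassemble the scalar estimates over the $2^n$ components using the norm equivalence from the first step. The resulting constants depend only on $n$, $p$, $q$, $\widetilde p$, $\widetilde q$, and not on $T$.
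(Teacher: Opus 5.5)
Your proposal is correct and is essentially the argument the paper relies on. The paper gives no proof of its own and only cites Ginibre--Velo and Keel--Tao; your sketch ($TT^*$ with the dispersive bound and Hardy--Littlewood--Sobolev in time, Christ--Kiselev away from $q=\widetilde q=2$, and the Keel--Tao bilinear argument at the endpoint) is the standard content of those references, and your componentwise reduction via $|\f|_0^2=2^n\sum_J|f_J|^2$ makes explicit the passage to $\BC\otimes\cl_{0,n}$-valued functions that the paper leaves implicit. One cosmetic point: the statement is not restricted to $n\geq 3$, but nothing is lost, since for $n\leq 2$ no admissible pair has $q=2$ (the triple $(2,\infty,2)$ is excluded by Definition~\ref{SchroedingerPair}), so your non-endpoint argument already covers those dimensions.
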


We shall apply Theorem~\ref{StrichartzTheorem} to \eqref{Duhamel-upm}, taking
\begin{eqnarray}
	\label{Strichart-f-F}
	\f=\u_\pm(\cdot,0)
	\qquad\mbox{and}\qquad
	{\bm F}=\P_\pm{\bm N}(\PSI,\u).
\end{eqnarray}

This reduces the proof of the a priori estimate for \(\PSI\) to the
control of the initial gauge datum $\displaystyle \u(\cdot,0)=\u_+(\cdot,0)+\u_-(\cdot,0)$ and of the gauge nonlinearity \eqref{Nelson_pm}.

\subsection{Proof of Theorem \ref{StrichartzTheorem-PSI}}

\begin{proof}
	
	Fix \(t\in[0,T]\). By Proposition~\ref{Tecnical-Prop3} and
	Theorem~\ref{Theorem-Hs},
	\begin{equation}
		\label{Psi-from-u-Hs-Conservation}
		\|\PSI(\cdot,t)\|_{\dot W^{s,p}}
		\lesssim
		2^{\frac n2}
		\|\u(\cdot,t)\|_{\dot W^{s+1,p}}
		\left(
		1+\mathcal E_{\frac n2-1,T}(\PSI)
		\right).
	\end{equation}
	Taking \(s=\sigma\) and the \(L^q(0,T)-\)norm yields
	\begin{equation}
		\label{Psi-from-u-Hs-Mixed}
		\|\PSI\|_{L^q_T\dot W^{\sigma,p}_{\X}}
		\lesssim
		2^{\frac n2}
		\|\u\|_{L^q_T\dot W^{\sigma+1,p}_{\X}}
		\left(
		1+\mathcal E_{\frac n2-1,T}(\PSI)
		\right).
	\end{equation}
	
 Applying
	Theorem~\ref{StrichartzTheorem} to \eqref{Duhamel-upm}, with
\((q,p)\) and \((\widetilde q,\widetilde p)=(\infty,2)\), which are Schr\"odinger admissible for \(n\geq 3\), gives
	\begin{align}
		\|\u_\pm\|_{L^q_T\dot W^{\sigma+1,p}_{\X}}
		&\lesssim
		\|\u_\pm(\cdot,0)\|_{\dot H^{\sigma+1}}
		+
		\|\P_\pm{\bm N}(\PSI,\u)\|_{L^1_T\dot H^{\sigma+1}_{\X}}
		\nonumber\\
		&\lesssim
		2^{\frac n2}\|\PSI(\cdot,0)\|_{\dot H^\sigma}
		+
		\|\P_\pm{\bm N}(\PSI,\u)\|_{L^1_T\dot H^{\sigma+1}_{\X}}.
		\label{Strichartz-upm-Hsigma}
	\end{align}
	
In the last estimate, we used \eqref{Hs-u} and the commutation between
	\((-\Delta)^{\frac{\sigma+1}{2}}\), \(e^{\mp it\Delta}\), and the Plemelj projectors
	\(\P_\pm\).
	
	For the nonlinear terms of \eqref{Schr-Alfa}, Proposition~\ref{LeibnizSobolev-Theorem} gives
	\begin{align*}
		\|\P_\pm{\bm N}(\PSI(\cdot,t),\u(\cdot,t))\|_{\dot H^{\sigma+1}} 
		\lesssim &
		2^{\frac n2+1}
		\|\PSI(\cdot,t)\|_{\dot H^{\sigma+1}}
		\|\PSI(\cdot,t)\|_{L^\infty}
		\left(
		1+2\|\PSI(\cdot,t)\|_{\dot H^{\frac n2-1}}
		\right)
		\nonumber\\
		\lesssim &
		2^{\frac n2+1}
		\|\PSI\|_{L^\infty_T\dot H^{\sigma+1}_{\X}}
		\|\PSI\|_{L^\infty_TL^\infty_{\X}}
		\left(
		1+2\mathcal E_{\frac n2-1,T}(\PSI)
		\right),
	\end{align*}
	where the second inequality follows from Theorem~\ref{Theorem-Hs}, with
	\(\displaystyle s=\frac{n}{2}-1\). Therefore,
	\begin{align}
		\|\P_\pm{\bm N}(\PSI,\u)\|_{L^1_T\dot H^{\sigma+1}_{\X}}
		\lesssim{}&
		2^{\frac n2+1}T
		\|\PSI\|_{L^\infty_T\dot H^{\sigma+1}_{\X}}
		\|\PSI\|_{L^\infty_TL^\infty_{\X}}
		\left(
		1+2\mathcal E_{\frac{n}{2}-1,T}(\PSI)
		\right).
		\label{Nonlinear-u-Hsigma}
	\end{align}
	
	Since \(\u=\u_++\u_-\), \eqref{Strichartz-upm-Hsigma} and
	\eqref{Nonlinear-u-Hsigma} imply
	\begin{align}
		\|\u\|_{L^q_T\dot W^{\sigma+1,p}_{\X}}
		\lesssim{}&
		2^{\frac n2+1}
		\Bigg[
		\|\PSI(\cdot,0)\|_{\dot H^\sigma}
		+
		2T
		\|\PSI\|_{L^\infty_T\dot H^{\sigma+1}_{\X}}
		\|\PSI\|_{L^\infty_TL^\infty_{\X}}
		\nonumber\\
		&\hspace{2.5cm}\times
		\left(
		1+2\mathcal E_{\frac n2-1,T}(\PSI)
		\right)
		\Bigg].
		\label{u-Strichartz-preE}
	\end{align}
	By the definition of \(\mathcal E_{\sigma,T}(\PSI)\),
	\[
	\|\PSI(\cdot,0)\|_{\dot H^\sigma}
	\leq
	e^{-\frac T2}\mathcal E_{\sigma,T}(\PSI)
	\]
	and
	\[
	T
	\|\PSI\|_{L^\infty_T\dot H^{\sigma+1}_{\X}}
	\|\PSI\|_{L^\infty_TL^\infty_{\X}}
	\leq
	e^{-\frac T2}\sqrt T\,
	\mathcal E_{\sigma,T}(\PSI).
	\]
	Hence,
	\begin{align}
		\|\u\|_{L^q_T\dot W^{\sigma+1,p}_{\X}}
		&\lesssim
		2^{\frac n2+1}e^{-\frac T2}
		\mathcal E_{\sigma,T}(\PSI)
		\left[
		1+2\sqrt T
		\left(
		1+2\mathcal E_{\frac n2-1,T}(\PSI)
		\right)
		\right]
		\nonumber\\
		&\lesssim
		2^{\frac n2+1}	\left(1+2\sqrt T\right)e^{-\frac{T}{2}}
		\mathcal E_{\sigma,T}(\PSI)
		\left(
		1+2\mathcal E_{\frac n2-1,T}(\PSI)
		\right).
		\label{Strichartz-uE-Hsigma}
	\end{align}
	
	Substituting \eqref{Strichartz-uE-Hsigma} into
	\eqref{Psi-from-u-Hs-Mixed} proves
	\eqref{StrichartzEstimate-Hgamma}.
\end{proof}

\begin{remark}[Choice of the Strichartz space]
For any sharp admissible pair \((q,p)\), we first estimate the gauge components \(\u_\pm\) in \(L^q_T\dot{W}^{\sigma+1,p}_x\), using the pair \((\infty,2)\) for the inhomogeneous term. The latter is admissible because the gauge nonlinearity belongs to \(L^1_T\dot{H}^{\sigma+1}_{\X}\). We observe also that the \(L^\infty_{T}L^\infty_{\X}-\)term appearing in \eqref{Nonlinear-u-Hsigma} is not itself a consequence of the critical Sobolev regularity $\displaystyle s=\frac{n}{2}-1$ (see Proposition~\ref{Tecnical-Prop1}). Finally, Proposition~\ref{Tecnical-Prop3} transfers the estimate for \(\u\) to \(\PSI\), giving the bound in \(L^q_T\dot{W}^{\sigma,p}_x\).
\end{remark}

\section*{AI disclosure}

AI tools were used to proofread certain computations and enhance the manuscript's readability. After using these tools, the author reviewed and edited the content as needed and takes full responsibility for the content of the manuscript.

\end{document}